\numberwithin{equation}{section}
\newcommand{\R}{\mathbb{R}}
\newtheorem{theorem}{Theorem}[section]
\newtheorem{lemma}[theorem]{Lemma}
\theoremstyle{definition}
\newtheorem{remark}[theorem]{Remark}
\newcommand{\Extend}[5]{\ext@arrow0099{\arrowfill@#1#2#3}{#4}{#5}}
\begin{document}
\title[Decoupling]{$\ell^2$ decoupling for certain surfaces of finite type in $\mathbb{R}^3$}

\author{Zhuoran Li}
\address{The Graduate School of China Academy of Engineering Physics, P. O. Box 2101,\ Beijing,\ China,\ 100088 ;}
\email{lizhuoran18@gscaep.ac.cn}

\author{Jiqiang Zheng}
\address{Institute of Applied Physics and Computational Mathematics, Beijing 100088}
\email{zhengjiqiang@gmail.com}

\begin{abstract}
In this article, we establish an $\ell^2$ decoupling inequality for the surface
\[F_4^2:=\Big\{(\xi_1,\xi_2,\xi_1^4+\xi_2^4): (\xi_1,\xi_2) \in [0,1]^2\Big\}\]
associated with the decomposition adapted to finite type geometry from our previous work \cite{LMZ}.
The key ingredients of the proof include the so-called generalized rescaling technique, an $\ell^2$ decoupling inequality for the surfaces
\[\Big\{(\xi_1,\xi_2,\phi_1(\xi_1)+\xi_2^4): (\xi_1,\xi_2) \in [0,1]^2\Big\}\]
with $\phi_1$ being non-degenerate,
reduction of dimension arguments and induction on scales.
\end{abstract}

\maketitle

\begin{center}
 \begin{minipage}{100mm}
   { \small {{\bf Key Words:}  decoupling inequality; finite type; reduction of dimension arguments; induction on scales.}
      {}
   }\\
    { \small {\bf AMS Classification: 42B10 }
      {}
      }
 \end{minipage}
 \end{center}





\section{Introduction and main result}\label{sec:intmai}

\noindent

Decoupling inequalities was introduced by Wolff \cite{Wolff00} in connection with the local smoothing estimates for the solution to the wave equation. Bourgain made important progress in developing the decoupling theory for hypersurfaces in \cite{Bo13} via the multilinear restriction theory of Bennett, Carbery and Tao \cite{BCT}, and applied  decoupling inequalities to the study of exponential sums. In 2015, Bourgain and Demeter \cite{BD15} proved the sharp $\ell^2$ decoupling inequality for compact hypersurfaces with positive definite second fundamental form,
which has a wide range of important consequences. One of them is the validity of the discrete restriction conjecture, which implies the full range of expected $L^p_{t,x}$ Strichartz estimates for the Schr\"{o}dinger equation on both the rational and irrational torus. The second striking application is an improvement on the local smoothing conjecture for the wave equation. In particular, inspired by Bourgain-Demeter's work \cite{BD15}, Guth, Wang and Zhang \cite{GWZ20} established a sharp reverse square function estimate for the cone in $\mathbb{R}^3$, and solved  the local smoothing conjecture for the wave equation in $\mathbb{R}^{2+1}$. Thirdly, by establishing the sharp decoupling inequality for the moment curve, Bourgain, Demeter and Guth \cite{BDG16} proved the main conjecture in Vinogradov's mean value theorem for degrees higher than three, which is a remarkable application of decoupling methods in number theory.

Recently, the decoupling theory for non-degenerate hypersurfaces has been well understood. However, the decoupling theory for degenerate hypersurfaces such as hypersurfaces of finite type is interesting but still unknown. 
In this paper, we will study the $\ell^2$ decoupling problem for certain  surfaces of finite type in $\mathbb{R}^3$. More precisely, we consider the surface in $\mathbb{R}^3$ given by
\begin{equation*}
  F^2_m:=\Big\{(\xi_1,\xi_2,\xi_1^m+\xi_2^m): (\xi_1,\xi_2) \in [0,1]^2\Big\},\;m\geq2.
\end{equation*}
For any function $g\in L^1([0,1]^2)$ and each subset $Q\subset [0,1]^{2}$, we denote the corresponding Fourier extension operator by
\begin{equation*}
  \mathcal E_{Q}g(x):= \int_Q g(\xi_1,\xi_2)e\big(x_1\xi_1+x_2\xi_2+x_3(\xi^m_1+\xi_2^m)\big)\;d\xi_1\;d\xi_2,
\end{equation*}
where $e(t)=e^{2\pi i t}$ for $t \in \mathbb{R}$, and $x=(x_1,x_2,x_3)\in\R^3.$

For $m=2$, the surface $F^2_2$ is exactly a paraboloid over the region $[0,1]^2$.
By making use of Bourgain-Guth methods in \cite{BG}, parabolic rescaling and induction on scales,
Bourgain and Demeter \cite{BD15} have established the sharp $\ell^2$ decoupling inequality for the paraboloid in $\mathbb{R}^n$ as follows.

\begin{theorem}\label{thm:BD}
For $2\leq p\leq \frac{2(n+1)}{n-1}$ and all $\varepsilon > 0$, there exists a constant $C(\varepsilon,p)$ such that
\begin{equation}\label{equ:bd} \|\mathcal {E}^{Par}_{[0,1]^{n-1}}g\|_{L^{p}(B^n_R)}\leq C(\varepsilon,p)R^{\varepsilon}\Big(\sum_{\theta:\; R^{-1/2}-\text{cubes\; in\;} [0,1]^{n-1}}\|\mathcal {E}^{Par}_{\theta}g\|^2_{L^{p}(w_{B^n_R})}\Big)^{1/2},
\end{equation}
where $\mathcal{E}^{Par}_{[0,1]^{n-1}}$ denotes the Fourier extension operator associated with the paraboloid $P^{n-1}$ and $w_{B^n_R}(x)= \big(1+\frac{\vert x-x_0 \vert}{R}\big)^{-100n}$ denotes the standard weight function adapted to the ball $B^n_R$ in $\mathbb{R}^n$ centered at $x_0$ with radius $R$.
\end{theorem}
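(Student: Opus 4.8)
The plan is to reproduce Bourgain and Demeter's argument for the $\ell^2$ paraboloid decoupling: reduce the linear estimate to a multilinear (transversal) estimate via a Bourgain--Guth broad/narrow decomposition, prove the multilinear estimate from the Bennett--Carbery--Tao multilinear restriction/Kakeya inequality together with decoupling in lower dimensions, and then close a double induction on the scale $R$ and on the dimension $n$. Write $D_p(R)$ for the best constant in \eqref{equ:bd} and set $p_n=\frac{2(n+1)}{n-1}$. Since the $\mathcal E^{Par}_\theta g$ have Fourier supports in finitely overlapping slabs, $L^2$-orthogonality gives $D_2(R)\lesssim 1$; interpolating this with the critical exponent, it suffices to prove $D_{p_n}(R)\lesssim_\epsilon R^\epsilon$, and a standard flattening/pigeonholing lets one work one intermediate scale at a time throughout.

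\textbf{Broad/narrow and parabolic rescaling.} Fix a large parameter $K=K(\epsilon)$ and partition $[0,1]^{n-1}$ into $K^{-1}$-caps $\tau$. At each point of a ball $B^n_R$, either a bounded cluster of mutually close caps $\tau$ carries the bulk of $|\mathcal E^{Par}_{[0,1]^{n-1}}g|$ (the \emph{narrow} alternative), or there are $n$ quantitatively transversal caps $\tau_1,\dots,\tau_n$ with $|\mathcal E^{Par}_{[0,1]^{n-1}}g|\lesssim_K\prod_{i=1}^n|\mathcal E^{Par}_{\tau_i}g|^{1/n}$ (the \emph{broad} alternative). In the narrow case, the affine map sending a $K^{-1}$-cap onto $[0,1]^{n-1}$ converts the remaining decoupling into the full decoupling at scale $RK^{-2}$, inserting a factor $D_{p_n}(RK^{-2})$; this is where the induction on $R$ enters. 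The broad case is precisely what the multilinear estimate is designed to handle.

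\textbf{Multilinear decoupling.} The crux is the $n$-linear inequality
\[
\Big\|\prod_{i=1}^n|\mathcal E^{Par}_{\tau_i}g|^{1/n}\Big\|_{L^{p_n}(B^n_R)}\lesssim_\epsilon R^{\epsilon}\prod_{i=1}^n\Big(\sum_{\theta\subset\tau_i}\|\mathcal E^{Par}_\theta g\|_{L^{p_n}(w_{B^n_R})}^2\Big)^{1/(2n)}
\]
for transversal $\tau_1,\dots,\tau_n$, the $\theta$ being $R^{-1/2}$-caps. I would prove it by interpolating two ingredients: the trivial flat ($L^2$-orthogonality based) decoupling into $R^{-1/2}$-caps, valid for all $p$ up to a power of the number of caps; and a higher-integrability bound produced from the Bennett--Carbery--Tao multilinear Kakeya/restriction inequality through a ``ball-inflation'' iteration, in which information at a scale $\delta$ on balls of radius $\delta^{-2}$ is repeatedly upgraded to radius $\delta^{-1}$, each step also invoking the $\ell^2$ decoupling for the paraboloid $P^{n-2}$ in dimension $n-1$ on affine slices (the induction on $n$; the base case $n=2$, i.e. $L^6$ decoupling for the parabola, is classical). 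Choosing the interpolation exponents so that the two gains balance precisely at $p_n$ yields the displayed bound, with accumulated loss over the finitely many iteration steps at most $C_\epsilon(\log R)^{O(1)}$, hence $\lesssim_\epsilon R^{\epsilon}$.

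\textbf{Closing the induction, and the main obstacle.} Combining the broad/narrow dichotomy with the multilinear bound gives a recursion in which $D_{p_n}(R)$ is controlled by $D_{p_n}$ at a strictly smaller scale (from the narrow term, via parabolic rescaling) plus an $R^\epsilon$-bounded multilinear contribution; a further self-improvement shows the multilinear constant is itself governed at scale $R^{1/2}$, so that iterating $O(\log\log R)$ times collapses the problem to a bounded scale while accumulating only a $(\log R)^{O_\epsilon(1)}$ factor. This forces the growth exponent of $D_{p_n}$ down to $0$ and completes the proof. The genuinely hard step is the multilinear decoupling: multilinear Kakeya/restriction on its own is too weak, and one must interleave it with lower-dimensional decoupling inside the ball-inflation iteration, tracking carefully how scales and exponents evolve so that no power-of-$R$ loss survives at the critical exponent. (One could instead substitute Guth's variant relying only on multilinear Kakeya, or the later efficient-congruencing-style argument of Bourgain--Demeter, for this step.)
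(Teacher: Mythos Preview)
Your sketch is a faithful outline of the original Bourgain--Demeter argument in \cite{BD15}: the broad/narrow dichotomy, multilinear decoupling via Bennett--Carbery--Tao combined with lower-dimensional decoupling and ball inflation, and the induction on scales and dimension are exactly the ingredients of that proof. As a high-level plan there is no genuine gap.

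However, you should be aware that the paper does \emph{not} prove Theorem~\ref{thm:BD} at all. It is stated purely as a quotation of the main result of \cite{BD15} and is used as a black box throughout (for instance in deriving \eqref{omega0tauparp} and in Lemma~\ref{lem:curve1new}). So there is no ``paper's own proof'' to compare against: the authors simply cite Bourgain--Demeter and move on. Your write-up is therefore not wrong, but it goes well beyond what the paper itself does for this statement; if your goal is to match the paper, a one-line citation to \cite{BD15} is all that is required.
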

We will also use the following equivalent version of \eqref{equ:bd}
\begin{equation}\label{equ:bd1}
  \|F\|_{L^{p}(\mathbb{R}^n)}\leq C(\varepsilon,p)R^{\varepsilon}\Big(\sum_{\bar{\theta}}\|F_{\bar{\theta}}\|^2_{L^{p}(\mathbb{R}^n)}\Big)^{1/2},\;\; 2\leq p \leq \frac{2(n+1)}{n-1},
\end{equation}
where ${\rm supp}\;\hat{F} \subset \mathcal{N}_{\frac{1}{R}}(P^{n-1})$, the $\bar{\theta}$'s are finitely overlapping $R^{-1/2}$-slabs in $\mathbb{R}^n$ which cover $\mathcal{N}_{\frac{1}{R}}(P^{n-1})$ (the $\tfrac1R-$neighborhood of $P^{n-1}$)  and $F_{\bar{\theta}}:= \mathcal{F}^{-1}\big(\hat{F}\chi_{\bar{\theta}}\big)$.
For the equivalence of \eqref{equ:bd} and \eqref{equ:bd1}, we refer to \cite{BD17}.

For $m>2$, the Gaussian curvature of the surface $F_m^2$ vanishes when $\xi_1=0$ or $\xi_2=0$. From now on, we focus on the case $m=4$, since the cases for even numbers larger than four can be deduced by the same argument. It is surprised for us that such a degenerate surface might possess the same range of exponent as in the $\ell^2$ decoupling inequality for the paraboloid. For this purpose, we adopt the notations as in the previous work \cite{LMZ}. At first, given $R\gg1$, we divide $[0,1]$ into
\begin{equation*}
  [0,1]=\bigcup_k I_k ,
\end{equation*}
where $I_0=[0,R^{-\frac14}]$ and
$$I_k=[2^{k-1}R^{-\frac14},2^{k}R^{-\frac14}], \;\;\text{for}\;\; 1\leq k \leq \big[\tfrac{1}{4}\log_{2}R\big].$$
For each $k\geq 1$, we divide $I_k$ further into
\[I_k=\bigcup_{\mu=1}^{2^{2(k-1)}} I_{k,\mu},\]
with
$$I_{k,\mu}=\big[2^{k-1}{R^{-\frac14}}+(\mu-1)2^{-(k-1)}R^{-\frac14},2^{k-1}R^{-\frac14}+ \mu 2^{-(k-1)}R^{-\frac14}\big].$$
Thus, we have the following decomposition
\begin{equation}\label{equ:r4regdec}
  [0,1]^{2}=\bigcup_{\theta\in \mathcal{F}_3(R,4)} \theta,
\end{equation}
where
\begin{align*}
   \mathcal{F}_3(R,4):=\big\{&I_{k_1,\mu_1}\times I_{k_2,\mu_2},\; I_0\times I_{k_2,\mu_2},\; I_{k_1,\mu_1}\times I_0,\; I_0\times I_0:\\
   &\quad 1\leq k_j \leq \big[\tfrac{1}{4}\log_{2}R\big],\;1\leq \mu_j\leq 2^{2(k-1)},\;j=1,2\big\}.
\end{align*}
We refer to \cite{LMZ} for the detailed decomposition \eqref{equ:r4regdec}. Buschenhenke \cite{Bus} utilized the analogous decomposition to study the restriction estimates for certain conic surfaces of finite type.

Our main result is the following decoupling inequality associated with the surface $F^2_4$ based on the decomposition \eqref{equ:r4regdec}.

\begin{theorem}\label{thm:main}
For $2\leq p\leq 4$ and any $\varepsilon > 0$, there exists a constant $C(\varepsilon,p)$ such that
\begin{equation}\label{equ:goalm}
  \|\mathcal E_{[0,1]^2}g\|_{L^{p}(B_R)}\leq C(\varepsilon,p)R^{\varepsilon}\Big(\sum_{\theta\in \mathcal{F}_3(R,4)}\|\mathcal E_{\theta}g\|^2_{L^{p}(w_{B_R})}\Big)^{1/2},
\end{equation}
with $w_{B_R}(x)$ being  the standard weight function  as in Theorem \ref{thm:BD}.
\end{theorem}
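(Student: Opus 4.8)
\emph{Strategy of the proof of Theorem~\ref{thm:main}.}
By standard reductions it suffices to prove \eqref{equ:goalm} at the endpoint $p=4$. The plan is to argue by a first \emph{flat} decoupling into the dyadic blocks of the decomposition, followed by a generalized rescaling on each block. Observe that $\mathcal F_3(R,4)$ is organized in two stages: one first partitions $[0,1]^2$ into the $\lesssim(\log R)^2$ blocks $I_{k_1}\times I_{k_2}$ with $0\le k_1,k_2\le[\tfrac14\log_2 R]$, and then subdivides each block into its fine caps (a factor $I_0$ is never subdivided). Since these blocks partition $[0,1]^2$, the triangle inequality together with Cauchy--Schwarz gives
\[
\|\mathcal E_{[0,1]^2}g\|_{L^4(w_{B_R})}\lesssim (\log R)\Big(\sum_{k_1,k_2}\|\mathcal E_{I_{k_1}\times I_{k_2}}g\|_{L^4(w_{B_R})}^2\Big)^{1/2},
\]
and as $\log R\lesssim_\eps R^\eps$, the theorem reduces to decoupling a single block $I_{k_1}\times I_{k_2}$ into its fine caps with loss $\lesssim_\eps R^\eps$. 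The block $I_0\times I_0$ is itself one element of $\mathcal F_3(R,4)$, so nothing is required there; from now on $\max(k_1,k_2)\ge1$, and by the $\xi_1\leftrightarrow\xi_2$ symmetry we assume $k_1\ge k_2$.

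For such a block the key step is the \emph{generalized rescaling}: apply to $\R^3$ the map that is the anisotropic dilation sending $\xi_j\mapsto\eta_j$ via $\xi_j=2^{k_j-1}R^{-1/4}(1+\eta_j)$ when $k_j\ge1$ (and $\xi_j=R^{-1/4}\eta_j$ when $k_j=0$), composed with the affine transformation of $\R^3$ that removes from $\xi_1^4+\xi_2^4$ all terms affine in the base variables and dilates the third coordinate by $2^{-4(k_1-1)}R$. A direct computation (possibly after one further dilation of $\eta_2$ and an additional harmless shear) identifies the image of $F_4^2$ over the block with a sub-surface, over a box inside $[0,1]^2$, of one of two model surfaces at the new scale $R':=2^{4(k_1-1)}$, which always satisfies $R'\le R/16$: when $\{k_2=0<k_1\}$ or $\{k_1>k_2\ge1\}$ the model is a piece of a surface $\{(\zeta_1,\zeta_2,\phi_1(\zeta_1)+\zeta_2^4)\}$ with $\phi_1$ non-degenerate (explicitly $\phi_1(t)=(1+t)^4$, up to an affine function); when $k_1=k_2\ge1$ the model is a smooth graph with positive definite second fundamental form. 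Moreover the weight $w_{B_R}$ is carried to an admissible weight, and—this is precisely why the dilation factors $2^{k_j-1}R^{-1/4}$ and $2^{-4(k_1-1)}R$ are chosen so—the images of the fine caps $I_{k_1,\mu_1}\times I_{k_2,\mu_2}$ are exactly the caps into which the relevant known decoupling theorem splits the model surface at scale $R'$.

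It then remains to apply, at scale $R'$, the decoupling appropriate to the model. For the surfaces $\{(\zeta_1,\zeta_2,\phi_1(\zeta_1)+\zeta_2^4)\}$ with $\phi_1$ non-degenerate we invoke the $\ell^2(L^4)$ decoupling inequality for those surfaces quoted in the introduction ($p=4$ being exactly the critical exponent $\tfrac{2(n+1)}{n-1}$ for $n=3$), used in its global form on $\R^3$ so as not to be disturbed by the anisotropy of the rescaled ball; for positive-definite models we use Bourgain--Demeter's theorem (Theorem~\ref{thm:BD}, in the version valid for graphs with positive definite second fundamental form). When one index $k_j$ vanishes the corresponding $\zeta$-variable forms a single cap, so in that case only a reduction-of-dimension argument is needed. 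Each such application costs $\lesssim_\eps (R')^\eps\le R^\eps$; pulling the resulting inequality back through the rescaling and summing over the $\lesssim(\log R)^2$ blocks yields \eqref{equ:goalm} for $p=4$, hence, by H\"older's inequality, in the full range $2\le p\le4$.

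The main obstacle is the generalized rescaling for the blocks with $k_1\neq k_2$: the affine map must be arranged so that, \emph{simultaneously}, the image of $F_4^2$ over the block is a genuine sub-surface of a surface to which a previously established decoupling applies, the new scale satisfies $R'\le R$, and—crucially—the partition it induces on the image is \emph{precisely} the fine partition $\{I_{k_1,\mu_1}\times I_{k_2,\mu_2}\}$ prescribed by $\mathcal F_3(R,4)$, since a coarser or finer induced partition could be repaired only at the cost of a genuine power of $R$, which would be fatal. Verifying this exact matching of scales, together with the routine but somewhat lengthy bookkeeping of the weights under the rescalings, is where the real work lies; the auxiliary $\ell^2$ decoupling for $\{(\xi_1,\xi_2,\phi_1(\xi_1)+\xi_2^4)\}$, and the induction on scales underlying it, are then used as black boxes.
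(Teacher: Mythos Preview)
Your proposal is correct, and the scale-matching you identify as the crux does indeed work out exactly as you say. The route, however, is organized differently from the paper's.

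The paper does \emph{not} decompose into all dyadic blocks $I_{k_1}\times I_{k_2}$ at once. Instead it fixes a small intermediate parameter $K\approx\log R$, splits $[0,1]^2$ into four regions $\Omega_0,\Omega_1,\Omega_2,\Omega_3$ according to whether each coordinate lies in $[0,K^{-1/4}]$ or $[K^{-1/4},1]$, and treats them separately: $\Omega_0$ by Bourgain--Demeter after rescaling; $\Omega_1,\Omega_2$ by first applying a reduction-of-dimension step (their Lemma~3.3, which uses one-dimensional decoupling for $t\mapsto t^4$) to reach rectangles $\tau_\lambda$, and only then rescaling each $\tau_\lambda$ to the $F^2_{2,4}$ model and invoking the auxiliary lemma; and $\Omega_3$ by rescaling back to the \emph{full} problem at scale $R/K$. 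This last step forces an induction on scales for the main constant itself, via the recurrence $\mathcal D_p(R)\le C_\eps K^{O(1)}R^\eps+\mathcal D_p(R/K)$, which is then iterated $\sim\log_K R$ times.

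Your approach effectively unfolds that iteration: paying a single harmless $(\log R)$ loss up front to reach all blocks $I_{k_1}\times I_{k_2}$, you dispose of each block in one rescaling step, landing either on a positive-definite model (Bourgain--Demeter) or on the $\phi_1(\zeta_1)+\zeta_2^4$ model (the auxiliary lemma) at a strictly smaller scale. In particular you never need the separate reduction-of-dimension Lemma~3.3, since the fine caps of a mixed block $I_{k_1}\times I_{k_2}$ with $k_1>k_2$ already match the $\mathcal F_3(R',2,4)$ caps of the $F^2_{2,4}$ model at the dyadic height $\sigma=2^{-(k_1-k_2)}$. Both arguments ultimately rest on the same auxiliary $F^2_{2,4}$ decoupling (the paper's Lemma~3.5), which you correctly use as a black box; your organization is more direct, while the paper's has the virtue of making the induction-on-scales structure (and hence the proof of that auxiliary lemma itself, which follows the same template) more visibly parallel.
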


\begin{remark}\label{rem:Xi}

Biswas, Gilula, Li, Schwend and Xi \cite{Xi} studied the $\ell^2$ decoupling problem for curves of finite type in the plane. Their decoupling inequality is based on a traditional decomposition as in the case of the parabola, which seems ineffectively in higher dimensional cases because of failure of standard parabolic rescaling. Recently, Yang \cite{Yang21} proved an uniform $\ell^2$ decoupling inequality for polynomials in $\R^2$. We refer the readers to Section 2.2 in \cite{LMZ} for the details on the decomposition adapted to finite type surfaces.

\end{remark}

\vskip 0.2in

The paper is organized as follows. In Section 2, we give the outline of the proof of Theorem \ref{thm:main}. In Section 3, we establish an $\ell^2$ decoupling inequality for the surfaces
\[\Big\{(\xi_1,\xi_2,\phi_1(\xi_1)+\xi_2^4): (\xi_1,\xi_2) \in [0,1]^2\Big\}\]
with $\phi_1$ being non-degenerate as a key lemma. With this in hand, we utilize Bourgain-Demeter's decoupling inequality for the perturbed paraboloid, reduction of dimension arguments and induction on scales to prove Theorem \ref{thm:main}.

\vskip 0.2in

{\bf Notations:} For nonnegative quantities $X$ and $Y$, we will write $X\lesssim Y$ to denote the estimate $X\leq C Y$ for some $C>0$. If $X\lesssim Y\lesssim X$, we simply write $X\sim Y$. Dependence of implicit constants on the power $p$ or the dimension will be suppressed; dependence on additional parameters will be indicated by subscripts. For example, $X\lesssim_u Y$ indicates $X\leq CY$ for some $C=C(u)$. We denote $e(t)=e^{2\pi i t}$. We denote $[x]$ to be the greatest integer not large than $x$. We use $B(x_0,r)$ to denote an arbitrary ball centred at $x_0$ with radius $r$ in $\mathbb{R}^3$ and abbreviate it by $B_r$ in the context. For any region $\Omega \subset \mathbb{R}^n$, we denote the characteristic function on $\Omega$ by $\chi_{\Omega}$. In $\R^n$, we denote $R^{-1/2}\times \cdot\cdot\cdot \times R^{-1/2} \times R^{-1}$-rectangle to be an $R^{-1/2}$-slab in $\mathbb{R}^n$. Define the Fourier transform on $\mathbb{R}^n$ by
\begin{equation*}
\aligned \widehat{f}(\xi):= \int_{\mathbb{R}^n}e^{- 2\pi ix\cdot \xi}f(x)\,dx,
\endaligned
\end{equation*}
and the inverse Fourier transform by
\begin{equation*}
\aligned \mathcal{F}^{-1}{f}(x):= \int_{\mathbb{R}^n}e^{2\pi ix\cdot \xi}f(\xi)\,d\xi.
\endaligned
\end{equation*}




\section{Outline of the proof of Theorem \ref{thm:main}}\label{sec:2}

Now, we give the sketch of the proof of Theorem \ref{thm:main}. By a direct computation, the Gaussian curvature of the surface $F^2_4$ has
\begin{equation}\label{equ:gausscur}
  k=\frac{144\xi^2_1\xi^2_2}{(1+16\xi^6_1+16\xi^6_2)^2}.
\end{equation}
We observe that the surface $F^2_4$ has positive definite second fundamental form if both $\xi_1$ and $\xi_2$ are away from zero. In this region, one can adopt Bourgain-Demeter's decoupling inequality for the perturbed paraboloid. However, when $\xi_1=0$ or $\xi_2=0$, the Gaussian curvature vanishes. Our strategy is to divide $[0,1]^2$ into $[0,1]^2=\bigcup\limits_{j=0}^3\Omega_j$ as in Fig.1,
\begin{center}
 \begin{tikzpicture}[scale=0.6]

\draw[->] (-0.2,0) -- (5,0) node[anchor=north]   {$\xi_1$};
\draw[->] (0,-0.2) -- (0,5)  node[anchor=east] {$\xi_2$};

 \draw[thick] (0,4) -- (4,4)
 (4,0)--(4,4)
 (0,1)--(4,1)
 (1,0)--(1,4) ;

 \draw (-0.15,0) node[anchor=north] {O}
      (1.35,-0.2) node[anchor=north] {$K^{-\frac14}$}
     (4,-0.08) node[anchor=north] {$1$}
    (2.5,2.5) node[anchor=north] {$\Omega_0$}
     (2.5,0.8) node[anchor=north] {$\Omega_1$}
     (0.5,2.5) node[anchor=north] {$\Omega_2$}
      (0.5,0.8) node[anchor=north] {$\Omega_3$};

 \draw  (-0.35, 1) node[anchor=east] {$K^{-\frac14}$}
       (0,4) node[anchor=east] {$1$};


\path (2.5,-2.0) node(caption){Fig. 1};
 \end{tikzpicture}
\end{center}
where
\begin{align*}
&\Omega_0:=[K^{-\frac14},1]\times[K^{-\frac14},1],\quad
\Omega_1:=[K^{-\frac14},1]\times[0,K^{-\frac14}],\\
&\Omega_2:=[0,K^{-\frac14}]\times[K^{-\frac14},1],\quad
\Omega_3:=[0,K^{-\frac14}]\times[0,K^{-\frac14}].
\end{align*}
In the region $\Omega_3$, we will use the rescaling technique. While for the regions $\Omega_1$ and $\Omega_2$, we reduce them to a lower dimensional decoupling problem.

For technical reasons, $K^{-1/4}$, $R^{-1/4}$ and $\big(\tfrac{R}{K}\big)^{-1/4}$ should be dyadic numbers satisfying $1\ll K \ll R^{\varepsilon}$ for any fixed $\varepsilon > 0$. Therefore, we choose $K=2^{4s}$  and $R=2^{4l}\; (s,l \in \mathbb{N})$ to be large numbers satisfying $K \approx \log
R$.
By the Minkowski inequality and Cauchy-Schwartz inequality, we have
 \begin{align}\label{equ:e01intr}
    \|\mathcal E_{[0,1]^2}g\|_{L^p(B_R)}\leq & 2\Big(\sum_{j=0}^{3} \|\mathcal E_{\Omega_j}g\|^2_{L^p(B_R)}\Big)^{1/2}.
 \end{align}
Let $\mathcal{D}_p(R)$ denote the least number such that
\begin{equation}\label{equ:defqpr1}
  \|\mathcal E_{[0,1]^2}g\|_{L^p(B_R)}\leq \mathcal{D}_p(R)\Big(\sum_{\theta\in \mathcal{F}_3(R,4)}\|\mathcal E_{\theta}g\|^2_{L^{p}(w_{B_R})}\Big)^{1/2}.
\end{equation}
{\bf Case 1: The contribution comes from $\Omega_0$.} Roughly speaking, the strategy is to transform each small piece to a non-degenerate one. Then we employ Bourgain-Demeter's decoupling inequality for each non-degenerate piece. We first divide $\Omega_0$ into $\Omega_0= \bigcup \Omega_{\lambda,\sigma}$ with
 \begin{equation}\label{omega0decomposition}
\Omega_{\lambda,\sigma}=\big\{(\xi_1,\xi_2):\; \lambda \leq \xi_1 \leq 2\lambda,\;\;\sigma \leq \xi_2 \leq 2\sigma\big\},
\end{equation}
and
$\lambda,\sigma \in [K^{-\frac14}, \frac{1}{2}]$ are both dyadic numbers.
We know that the Gaussian curvature of $F^2_4$ is essentially a constant in the region  $\Omega_{\lambda,\sigma}.$
It is reasonable to divide $\Omega_{\lambda,\sigma}$ into $\lambda^2 K^{\frac12}\times\sigma^2 K^{\frac12}$ pieces of dimension $\lambda^{-1}K^{-\frac12}\times\sigma^{-1}K^{-\frac12}$ equally
\begin{align}\nonumber
   \Omega_{\lambda,\sigma}=&\bigcup_{1\le j\le\lambda^2K^{\frac12}\atop 1\le m\le \sigma^2K^{\frac12}}
\Big[\lambda+\frac{j-1}{\lambda K^{1/2}}, \lambda+\frac{j}{\lambda K^{1/2}} \Big]\times\Big[\sigma+\frac{m-1}{\sigma K^{1/2}}, \sigma+\frac{m}{\sigma K^{1/2}}\Big]\\\label{equ:taudef}
=:& \bigcup \tau_{\lambda,\sigma}^{j,m}.
\end{align}
For simplicity, we abbreviate $\tau_{\lambda,\sigma}^{j,m}$ by $\tau.$

We can implement the generalized rescaling to each $\tau$. Without loss of generality, we may assume that
\begin{equation}\label{equ:tauspec}
  \tau=[\lambda,\lambda+\lambda^{-1} K^{-\frac12}]\times [\sigma,\sigma+\sigma^{-1}K^{-\frac12}].
\end{equation} We take the change of variables
\begin{equation}\label{caseachangeofvariables}
  \xi_1=\lambda+\frac{\eta_1}{\lambda K^{\frac12}},\;
  \xi_2=\sigma+\frac{\eta_2}{\sigma K^{\frac12}}
  \end{equation}
to get
\begin{align}\nonumber
  \vert \mathcal E_{\tau}g(x)\vert=&\Big| \int_{[0,1]^2}\tilde{g}(\eta_1,\eta_2)e[\tilde{x}_1\eta_1+\tilde{x}_2\eta_2+\tilde{x}_3\psi_0(\eta_1,\eta_2)]
  \;d\eta_1\;d\eta_2\Big|\\\label{caseaoperator}
  =:&\vert {\mathcal E}^{\rm Parp}_{[0,1]^2}\tilde{g}(\tilde{x})\vert,
\end{align}
where
\begin{equation}\label{outlineomega0taurescaling}\left\{
  \begin{aligned}
&\tilde{x}=\big(\lambda^{-1}K^{-\frac12}x_1+4\lambda^2K^{-\frac12}x_3,\sigma^{-1}K^{-\frac12}x_2+4\sigma^2K^{-\frac12}x_3,K^{-1}x_3\big),\\
&\tilde{g}(\eta_1,\eta_2)=\lambda^{-1}\sigma^{-1}K^{-1}g\Big(\lambda+\frac{\eta_1}{\lambda K^{\frac12}},\sigma+\frac{\eta_2}{\sigma K^{\frac12}}\Big),\\
&\psi_0(\eta_1,\eta_2)=(6\eta_1^2+4\lambda^{-2}K^{-\frac12}\eta_1^3+\lambda^{-4}K^{-1}\eta_1^4)
+(6\eta_2^2+4\sigma^{-2}K^{-\frac12}\eta_2^3+\sigma^{-4}K^{-1}\eta_2^4).
\end{aligned}\right.
\end{equation}
Here $\mathcal{E}^{\rm Parp}_{[0,1]^2}$ denotes the Fourier extension operator associated with the phase function $\psi_0$. Since $\lambda,\sigma \geq K^{-\frac14}$ and $0\leq \xi_i \leq 1(i=1,2)$, it is easy to see that $\psi_0$ is of the form $\psi_0(\eta_1,\eta_2)= \phi_1(\eta_1)+\phi_2(\eta_2)$ satisfying $$\phi''_j \sim 1,\; \vert \phi^{(3)}_j \vert \lesssim 1,\; \vert \phi^{(4)}_j \vert \lesssim 1 \quad \text{and} \quad \phi^{(\ell)}_j =0,\; \ell \geq 5 \;  \text{on} \;[0,1],\; j=1,\;2.$$
Thus, we can adapt the argument in Section 7 of \cite{BD15} to obtain the following uniform decoupling inequality for $\Omega_0$:
\begin{equation}\label{equ:bd15}
  \|\mathcal E_{\Omega_0}g\|_{L^p(B_R)}\leq C(\varepsilon)K^{O(1)}R^{\varepsilon}\Big(\sum_{\theta \subset \Omega_0}\|\mathcal E_{\theta}g\|^2_{L^{p}(w_{B_R})}\Big)^{1/2},  \;\; 2\leq p\leq 4,
\end{equation}
where $K^{O(1)}$ denotes a fixed power of $K$, and $\theta \in \mathcal{F}_3(R,4)$. This will be shown in Subsection \ref{subs:3.1} below.

{\bf Case 2: The contribution comes from $\Omega_3=[0,K^{-\frac14}]\times[0,K^{-\frac14}]$.} To do it, we will apply rescaling to
$|\mathcal E_{\Omega_3}g|$ directly and transform $\|\mathcal E_{\Omega_3}g\|_{L^p(B_R)}$ to $\|\mathcal{E}_{[0,1]^2}\tilde{g}\|_{L^p(B_{R/K})}$. More precisely, we take the change of variables
\begin{equation}\label{casebchangeofvariables}
  \xi_1=K^{-\frac14}\eta_1,\;
  \xi_2=K^{-\frac14}\eta_2,
\end{equation}
and obtain
\begin{align}\nonumber
  \vert \mathcal E_{\tau}g(x)\vert=&\Big| \int_{[0,1]^2}\tilde{g}(\eta_1,\eta_2)e[\tilde{x}_1\eta_1+\tilde{x}_2\eta_2+\tilde{x}_3\psi(\eta_1,\eta_2)]
  \;d\eta_1\;d\eta_2\Big|\\\label{caseboperator}
  =&\vert \mathcal E_{[0,1]^2}\tilde{g}(\tilde{x})\vert,
\end{align}
where
\begin{equation*}
  \left\{\begin{aligned}
&\tilde{x}=\big(K^{-\frac14}x_1,K^{-\frac14}x_2,K^{-1}x_3\big),\\
&\tilde{g}(\eta_1,\eta_2)=K^{-\frac12}g\big(K^{-\frac14}\eta_1,K^{-\frac14}\eta_2\big),\\
&\psi(\eta_1,\eta_2)=\eta_1^4+\eta_2^4.
\end{aligned}\right.
\end{equation*}

For the contribution of $\Omega_3$-part, we get by rescaling and the definition of $\mathcal{D}_p(\cdot)$
\begin{equation}\label{equ:omeest1}
  \|\mathcal E_{\Omega_3}g\|_{L^p(B_R)}\leq \mathcal{D}_p\Big(\frac{R}{K}\Big)\Big(\sum_{\theta\subset \Omega_3}\|\mathcal E_{\theta}g\|^2_{L^{p}(w_{B_R})}\Big)^{1/2},
\end{equation}
where $\theta \in \mathcal{F}_3(R,4)$.
We refer the details in Lemma \ref{lem:omega3} below.

{\bf Case 3: The contribution comes from $\Omega_1$-part and $\Omega_2$-part.} By symmetry, it suffices to treat $\Omega_1$-part.
To do it, we will establish an auxiliary lemma (see Lemma \ref{lem:24} below) to handle the Fourier extension operator associated with certain phase functions which are degenerate in $\xi_2$-variable but non-degenerate in $\xi_1$-variable. More precisely, we first divide $\Omega_1$ into $\Omega_1= \bigcup \Omega_{1,\lambda}$ with
 $$\Omega_{1,\lambda}=[\lambda,2\lambda]\times[0,K^{-\frac14}],$$
 where $\lambda\in\big[K^{-\frac14},\tfrac12\big]$ is a dyadic number. We divide $\Omega_{1,\lambda}$ further into
 \begin{align}\nonumber
   \Omega_{1,\lambda}= & \bigcup_{1\le j\le\lambda^2K^{\frac12}} \Big[\lambda+\frac{j-1}{\lambda K^{1/2}},\lambda+\frac{j}{\lambda K^{1/2}}\Big]\times [0,K^{-\frac14}]\\\label{equ:om1lamtau}
   =:&\bigcup \tau_{\lambda}^j.
 \end{align}
 We abbreviate $\tau_{\lambda}^{j}$ by $\tau.$ By reduction of dimension arguments and \eqref{equ:bd1} with $n=2$, we can derive that
 \begin{equation}\label{equ:omegall-1}
\| \mathcal{E}_{\Omega_1}g \|_{L^p(B_R)}\leq C_{\varepsilon}K^{\varepsilon}\Big(\sum_{\tau \subset \Omega_1} \| \mathcal{E}_{\tau}g \|^2_{L^p(w_{B_R})}\Big)^{1/2}.
\end{equation}
This will be shown in Lemma \ref{lem:3.3} below.

Next, we estimate the
$\| \mathcal{E}_{\tau}g \|_{L^p(B_R)}$ for the typical case $$\tau =[\lambda,\lambda+\lambda^{-1}K^{-\frac12}]\times[0,K^{-\frac14}].$$ The other cases can be deduced by the same argument.
We take the change of variables
\begin{equation}\label{casecchangeofvariables}
  \xi_1=\lambda+\frac{\eta_1}{\lambda K^{\frac12}},\;
  \xi_2=\frac{\eta_2}{K^{\frac14}}
\end{equation}
to obtain
\begin{align}\nonumber
  \vert \mathcal E_{\tau}g(x)\vert=&\Big| \int_{[0,1]^2}\tilde{g}(\eta_1,\eta_2)e[\tilde{x}_1\eta_1+\tilde{x}_2\eta_2+\tilde{x}_3\psi_1(\eta_1,\eta_2)]
  \;d\eta_1\;d\eta_2\Big|\\\label{casecoperator}
  =:&\vert \tilde{\mathcal{E}}_{[0,1]^2}\tilde{g}(\tilde{x})\vert,
\end{align}
where
\begin{equation}\label{equ:gtildef}\left\{\begin{aligned}
&\tilde{x}=\big(\lambda^{-1}K^{-\frac12}x_1+4\lambda^2K^{-\frac12}x_3,K^{-\frac14}x_2,K^{-1}x_3\big),\\
&\tilde{g}(\eta_1,\eta_2)=\lambda^{-1}K^{-3/4}g\big(\lambda+\frac{\eta_1}{\lambda K^{\frac12}},\frac{\eta_2}{K^{\frac14}}\big),\\
&\psi_1(\eta_1,\eta_2)
=(6\eta_1^2+4\lambda^{-2}K^{-\frac12}\eta_1^3+\lambda^{-4}K^{-1}\eta_1^4)
+\eta_2^4.
\end{aligned}\right.\end{equation}
Here $\tilde{\mathcal E}_{[0,1]^2}$ denotes the Fourier extension operator associated with phase functions of the form $\psi_1(\eta_1,\eta_2)=\phi_1(\eta_1)+\eta_2^4$ satisfying
$$\phi''_1 \sim 1,\; \vert \phi^{(3)}_1 \vert \lesssim 1,\; \vert \phi^{(4)}_1 \vert \lesssim 1 \;\text{and}\; \phi^{(\ell)}_1 =0,\; \ell \geq 5 \;\text{on}\; [0,1].$$
To deal with the $|\tilde{\mathcal E}_{[0,1]^2}\tilde{g}|$, we divide $[0,1]^2$ into $[0,1]^2= D_0 \bigcup D_1,$
where
$$D_0:= [0,1]\times [K^{-1/4},1],\;D_1:= [0,1]\times [0, K^{-1/4}].$$
The contribution stems from $D_0$ can be estimated by making use of the argument in Section 7 of \cite{BD15}. To estimate the contribution from $D_1$, we divide $D_1$ into a disjoint union of subregions
\[D_1=\bigcup \nu,\]
where $\nu$ denotes a $K^{-1/2}\times K^{-1/4}-$rectangle. Without loss of generality, we can assume that $\nu=[0, K^{-1/2}]\times [0, K^{-1/4}]$. The key point is that phase functions of the form $\psi_1(\eta_1,\eta_2)$ over the region $\nu \subset D_1$ are closed under the change of variable
\begin{equation*}
  \xi_1=\frac{\eta_1}{\lambda K^{\frac12}},\;
  \xi_2=\frac{\eta_2}{K^{\frac14}}
\end{equation*}
so that induction on scales can be used to estimate $\|\tilde{\mathcal{E}}_{\nu}\tilde{g}\|_{L^p(B_R)}$. Therefore,  we can obtain the decoupling inequality for $\Vert \tilde{\mathcal E}_{D_1}\tilde{g} \Vert_{L^p(B_R)}$.
From the discussion above, we can derive
\begin{equation}\label{equ:omegas1-1-1}
\| \mathcal{E}_{\tau} g \|_{L^p(B_R)}
\leq
 C_{\varepsilon}R^{\varepsilon}
\Big(
\sum_{\theta \subset \tau}
\| \mathcal{E}_{\theta}g \|^2_{L^p(w_{B_R})}
\Big)^{1/2},\quad 2\leq p \leq 4.
\end{equation}
This will be proved in \eqref{omegas1} below.

Combining \eqref{equ:omegall-1} and \eqref{equ:omegas1-1-1}, one can deduce that
\begin{equation}\label{equ:omeg1partcon}
   \|\mathcal E_{\Omega_1}g\|_{L^p(B_R)}\leq  C_{\varepsilon}R^{\varepsilon}\Big(\sum_{\theta\subset \Omega_1}\|\mathcal E_{\theta}g\|^2_{L^{p}(w_{B_R})}\Big)^{1/2},\;2\leq p\leq 4.
\end{equation}

By inequalities \eqref{equ:bd15}, \eqref{equ:omeest1} and \eqref{equ:omeg1partcon}, we obtain for $2\leq p\leq 4$
$$\|\mathcal E_{[0,1]^2}g\|_{L^p(B_R)}\leq \Big[C(\varepsilon)K^{O(1)}R^{\varepsilon}+2C_{\varepsilon}R^{\varepsilon}+\mathcal{D}_p\big(\tfrac{R}{K}\big)\Big]\Big(\sum_{\theta\in \mathcal{F}_3(R,4)}\|\mathcal E_{\theta}g\|^2_{L^{p}(w_{B_R})}\Big)^{1/2}.$$
This inequality implies the following recurrence inequality
\begin{equation}\label{outlinerecurrenceineq}
\mathcal{D}_p(R)\leq C(\varepsilon)K^{O(1)}R^{\varepsilon}+2C_{\varepsilon}R^{\varepsilon}
+\mathcal{D}_p\big(\tfrac{R}{K}\big).
\end{equation}
One can iterate \eqref{outlinerecurrenceineq} to deduce $\mathcal{D}_p(R)\lesssim_{\varepsilon}R^{\varepsilon}$. Therefore, we conclude the proof of Theorem \ref{thm:main}.

\vskip 0.2in




\section{Proof of Theorem \ref{thm:main}}

We are going to treat decoupling inequalities for different regions by making use of different approaches.
Firstly, we estimate the contribution from $\Omega_0$-part.

\subsection{Decoupling for $\Omega_0$}\label{subs:3.1}

In this subsection, we will establish the decoupling inequality for $\Omega_0$-part. Recall
 $$\Omega_0= \bigcup_{\lambda,\sigma} \Omega_{\lambda,\sigma}= \bigcup_{\lambda,\sigma}\Big( \bigcup_{j,m}\tau_{\lambda,\sigma}^{j,m}\Big).$$
Combining this decomposition with the Minkowski inequality and Cauchy-Schwartz inequality, we get a trivial decoupling at scale $K$ for $2\leq p \leq 4$
\begin{equation}\label{equ:trivial}
\| \mathcal{E}_{\Omega_0}g \|_{L^p(B_K)}\lesssim K^{1/2}\Big(\sum_{\tau \subset \Omega_0} \| \mathcal{E}_{\tau}g \|^2_{L^p(B_K)}\Big)^{1/2},
\end{equation}
where $\tau$ is as in \eqref{equ:taudef}.
Summing over all the balls $B_K \subset B_R$, we obtain
\begin{equation}\label{equ:trivialsumming}
\| \mathcal{E}_{\Omega_0}g \|_{L^p(B_R)}\lesssim K^{1/2}\Big(\sum_{\tau \subset \Omega_0} \| \mathcal{E}_{\tau}g \|^2_{L^p(B_R)}\Big)^{1/2}.
\end{equation}

For any given $\tau\subset\Omega_0$ of size $\lambda^{-1}K^{-1/2}\times \sigma^{-1}K^{-1/2}$, we have
\begin{lemma}\label{lem:omega0tau}
For $2\leq p \leq4$ and each $\varepsilon > 0$, there exists a positive constant $C_\varepsilon$ such that
\begin{equation}\label{equ:lamsigma}
  \|\mathcal E_{\tau}g\|_{L^p(B_R)}
  \leq C_{\varepsilon}R^{\varepsilon}\Big(\sum_{\theta \subset \tau} \| \mathcal{E}_{\theta}g \|^2_{L^p(w_{B_R})}\Big)^{1/2},
\end{equation}
where $\theta \in \mathcal{F}_3(R,4).$
\end{lemma}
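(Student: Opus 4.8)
The plan is to carry out a generalized rescaling that turns the extension operator $\mathcal{E}_\tau$ into an extension operator $\mathcal{E}^{\rm Parp}_{[0,1]^2}$ associated with the phase $\psi_0(\eta_1,\eta_2)=\phi_1(\eta_1)+\phi_2(\eta_2)$ from \eqref{outlineomega0taurescaling}, and then apply the decoupling for perturbed paraboloids. Concretely, I would first reduce to the model piece $\tau=[\lambda,\lambda+\lambda^{-1}K^{-1/2}]\times[\sigma,\sigma+\sigma^{-1}K^{-1/2}]$ as in \eqref{equ:tauspec} (the general $\tau_{\lambda,\sigma}^{j,m}$ is handled identically after an affine shift in the $\xi$-variables, which only changes the linear part of the phase and hence acts trivially on both sides of a decoupling inequality). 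Applying the change of variables \eqref{caseachangeofvariables} and absorbing the quadratic-in-$\xi$ terms into $\tilde x_1,\tilde x_2$, I obtain the identity $|\mathcal{E}_\tau g(x)| = |\mathcal{E}^{\rm Parp}_{[0,1]^2}\tilde g(\tilde x)|$ with $\tilde x$, $\tilde g$, $\psi_0$ exactly as in \eqref{outlineomega0taurescaling}. Since $\lambda,\sigma\ge K^{-1/4}$, the coefficients $\lambda^{-2}K^{-1/2},\lambda^{-4}K^{-1}$ (and the $\sigma$-analogues) are $O(1)$, so $\psi_0=\phi_1+\phi_2$ with $\phi_j''\sim 1$, $|\phi_j^{(3)}|,|\phi_j^{(4)}|\lesssim 1$ and $\phi_j^{(\ell)}=0$ for $\ell\ge 5$, uniformly in $\lambda,\sigma,K$.

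Next I would invoke the decoupling inequality for such perturbed paraboloids. This is the content of adapting Section 7 of \cite{BD15}: for $2\le p\le 4$ and the surface $\{(\eta_1,\eta_2,\phi_1(\eta_1)+\phi_2(\eta_2))\}$ with the above non-degeneracy, one has
\begin{equation*}
\|\mathcal{E}^{\rm Parp}_{[0,1]^2}\tilde g\|_{L^p(B_{R'})}\le C_\varepsilon (R')^{\varepsilon}\Big(\sum_{\theta':\ (R')^{-1/2}\text{-cubes}}\|\mathcal{E}^{\rm Parp}_{\theta'}\tilde g\|^2_{L^p(w_{B_{R'}})}\Big)^{1/2}
\end{equation*}
with constant uniform over the admissible $\phi_1,\phi_2$. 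I would apply this on the ball $B_{R'}$ of radius $R'\sim R$ that the image of $B_R$ under the linear map $x\mapsto\tilde x$ is contained in (the map has bounded eccentricity once we track the $K$-dependent factors, and enlarging the ball only helps via the weight $w_{B_R}$). The $(R')^{-1/2}$-cubes $\theta'$ in $\eta$-coordinates pull back, under the inverse of \eqref{caseachangeofvariables}, to rectangles of dimensions $\lambda^{-1}K^{-1/2}(R')^{-1/2}\times\sigma^{-1}K^{-1/2}(R')^{-1/2}$; since $R'\sim R$ and $R^{-1/2}\le \lambda^{-1}K^{-1/2}R^{-1/2}\le \lambda^{-1}$, one checks these are comparable to the caps $\theta\in\mathcal{F}_3(R,4)$ that tile $\tau$. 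Undoing the change of variables in each summand (which is an $L^p$-isometry up to the Jacobian constants that also appear on the left) converts the right-hand side back to $\big(\sum_{\theta\subset\tau}\|\mathcal{E}_\theta g\|^2_{L^p(w_{B_R})}\big)^{1/2}$, and since $K\lesssim\log R$ we may absorb any $K^{O(1)}$ loss into $C_\varepsilon R^\varepsilon$; this yields \eqref{equ:lamsigma}.

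The main obstacle, and the step I would be most careful about, is the bookkeeping connecting the rescaled $(R')^{-1/2}$-caps to the prescribed caps $\theta\in\mathcal{F}_3(R,4)$: one must verify that the partition of $\tau$ used in $\mathcal{F}_3(R,4)$ is, up to finite overlap and harmless dilations, exactly the pullback of the standard square-cap partition under \eqref{caseachangeofvariables}, so that the decoupling obtained in $\eta$-coordinates is genuinely a decoupling into the $\theta$'s of the theorem and not into some finer or coarser collection. A secondary technical point is the uniformity of the perturbed-paraboloid decoupling constant in the parameters $\lambda,\sigma$ (equivalently in the coefficients of $\phi_1,\phi_2$): this is where one really needs that the Bourgain--Demeter argument in Section 7 of \cite{BD15} only uses the quantitative bounds $\phi_j''\sim1$, $|\phi_j^{(3)}|,|\phi_j^{(4)}|\lesssim1$ rather than the precise form of $\phi_j$, together with the fact that all relevant multilinear Kakeya/restriction inputs are stable under such $C^2$-perturbations. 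Once these two points are in place, the rest is the routine change-of-variables algebra sketched above.
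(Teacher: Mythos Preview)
Your overall strategy coincides with the paper's: rescale $\tau$ to $[0,1]^2$ via \eqref{caseachangeofvariables}, land on a perturbed paraboloid with $\phi_j''\sim 1$, invoke the Section~7 argument of \cite{BD15}, and pull back. The gap is in the ball/cap bookkeeping you yourself flag as the main obstacle. The linear map $x\mapsto\tilde x$ does \emph{not} have bounded eccentricity: the image $\mathcal{L}_0(B_R)$ is a parallelepiped of dimensions roughly $\lambda^{-1}K^{-1/2}R\times\sigma^{-1}K^{-1/2}R\times K^{-1}R$, so its shortest side is $R/K$, not $\sim R$. If you nonetheless enclose it in a ball of radius $R'\sim R$ and decouple at scale $R$, the resulting $(R')^{-1/2}$-cubes pull back to rectangles of side $\lambda^{-1}K^{-1/2}R^{-1/2}$, which are $K^{1/2}$ times \emph{finer} than the caps $\theta\in\mathcal{F}_3(R,4)$ (these have side $\lambda^{-1}R^{-1/2}$). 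Your inequality $R^{-1/2}\le \lambda^{-1}K^{-1/2}R^{-1/2}$ is in fact reversed, since $\lambda\ge K^{-1/4}$ forces $\lambda^{-1}K^{-1/2}\le K^{-1/4}<1$. And a decoupling into a strictly finer family does not imply the desired coarser decoupling with only a $K^{O(1)}$ loss: you would need $\sum_{\theta'\subset\theta}\|\mathcal{E}_{\theta'}g\|_{L^p}^2\lesssim \|\mathcal{E}_\theta g\|_{L^p}^2$, which fails for $p>2$ in general.

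The fix is exactly what the paper does: cover $\mathcal{L}_0(B_R)$ by finitely overlapping balls $B_{R/K}$ (the scale dictated by the shortest side), apply the perturbed-paraboloid decoupling on each $B_{R/K}$ so that the caps in $\eta$-space have side $(R/K)^{-1/2}=K^{1/2}R^{-1/2}$, observe that these pull back precisely to the $\theta$'s of side $\lambda^{-1}R^{-1/2}\times\sigma^{-1}R^{-1/2}$ in $\mathcal{F}_3(R,4)$, and then sum over the $B_{R/K}$'s via Minkowski. Once $R'$ is corrected to $R/K$, the rest of your argument (uniformity of the constant in $\lambda,\sigma$, absorbing $K^{O(1)}$ into $R^\varepsilon$) is fine and matches the paper.
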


With Lemma \ref{lem:omega0tau} in hand, plugging \eqref{equ:lamsigma} into \eqref{equ:trivialsumming}, we get the decoupling inequality for $\Omega_0$
\begin{equation}\label{equ:omegaoo}
\| \mathcal{E}_{\Omega_0}g \|_{L^p(B_R)}\leq C_{\varepsilon}K^{O(1)}R^{\varepsilon}\Big(\sum_{\theta \subset \Omega_0} \Vert \mathcal{E}_{\theta}g \Vert^2_{L^p(w_{B_R})}\Big)^{1/2}.
\end{equation}

\begin{proof}[{\bf Proof of Lemma \ref{lem:omega0tau}:}] Without loss of generality, we may assume that $B_R$ is centered at the origin and $\tau$ is as in \eqref{equ:tauspec}.
From \eqref{caseachangeofvariables} in Section \ref{sec:2}, we know that
\[\|\mathcal{E}_{\tau}g\|^p_{L^p(B_R)}=\lambda \sigma K^2 \|\mathcal{E}^{Parp}_{[0,1]^2}\tilde{g}\|^p_{L^p(\mathcal{L}_0(B_R))},\]
where  $\mathcal{L}_0$ denotes the map
$$(x_1,x_2,x_3)\mapsto \big(\lambda^{-1}K^{-\frac12}x_1+4\lambda^2K^{-\frac12}x_3,\sigma^{-1}K^{-\frac12}x_2+4\sigma^2K^{-\frac12}x_3,K^{-1}x_3\big),$$
and
$\mathcal{L}_0(B_R)$ denotes the image of $B_R$ with size roughly as follows
\[\lambda^{-1}K^{-1/2}R\times \sigma^{-1}K^{-1/2}R\times K^{-1}R.\]
We divide it into a finitely overlapping union of balls as follows
\[\mathcal{L}_0(B_R)=\bigcup B_{R/K}.\]
For a given $\theta \subset \tau$ such as
\[\theta = [\lambda, \lambda+\lambda^{-1}R^{-1/2}]\times[\sigma, \sigma+\sigma^{-1}R^{-1/2}],\]
and under the change of variables \eqref{caseachangeofvariables}, we deduce that the image of $\theta$ is
\[\tilde{\theta}=[0, K^{1/2}R^{-1/2}]^2.\]
We use Bourgain-Demeter's decoupling inequality \eqref{equ:bd} with $n=3$  on each $B_{R/K}$
\begin{equation}\label{omega0tauparp}
\big\|\mathcal{E}^{Parp}_{[0,1]^2}\tilde{g}\big\|_{L^p(B_{R/K})}\leq C_{\varepsilon}R^{\varepsilon}\Big(\sum_{\tilde{\theta}:K^{1/2}R^{-1/2}-square}\| \mathcal{E}^{Parp}_{\tilde{\theta}}\tilde{g}\|^2_{L^p(w_{B_{R/K}})}\Big)^{1/2}.
\end{equation}
This can be done by the argument in Section 7 of \cite{BD15}. In fact, we denote $K_p(R)$ to be the least number such that
\begin{equation}\label{equ:fkpR}
  \Vert F \Vert_{L^p(\mathbb{R}^3)}\leq K_p(R)\Big(\sum_{\bar{\theta}:R^{-1/2}-slab}\| F_{\bar{\theta}}\|^2_{L^p(\mathbb{R}^3)}\Big)^{1/2}
\end{equation}
holds for each $F$ with Fourier support in $\mathcal{N}_{R^{-1}}(S)$, where $S:= \{(\eta_1, \eta_2, \psi_0(\eta_1,\eta_2)): (\eta_1, \eta_2)\in [0,1]^2\}$ with $\psi_0(\eta_1,\eta_2)$ as in Case 1 of Section \ref{sec:2}.
It follows from \eqref{equ:fkpR} that
\begin{equation}\label{Falphadec}
\Vert F \Vert_{L^p(\mathbb{R}^3)}\leq
K_p\big(R^{2/3}\big)\Big(\sum_{\bar{\alpha}:R^{-1/3}-slab}\Vert F_{\bar{\alpha}}\Vert^2_{L^p(\mathbb{R}^3)}\Big)^{1/2}.
\end{equation}
Furthermore, from Taylor's formula, we know that on each $\bar{\alpha}$, $S$ is contained in $\mathcal{N}_{R^{-1}}(P^2)$. By invoking \eqref{equ:bd1} for this paraboloid and parabolic rescaling, we get
\begin{equation}\label{Fthetadec}
\Vert F_{\bar{\alpha}} \Vert_{L^p(\mathbb{R}^3)}\leq C_{\varepsilon}R^{\varepsilon} \Big(\sum_{\bar{\theta}:\bar{\theta}\subset \bar{\alpha}}\| F_{\bar{\theta}}\|^2_{L^p(\mathbb{R}^3)}\Big)^{1/2}.
\end{equation}
Hence, we conclude
\[K_p(R)\leq C_{\varepsilon}R^{\varepsilon}K_p\big(R^{2/3}\big),\]
which immediately leads to $K_p(R)\lesssim_{\varepsilon}R^{\varepsilon}$ by iteration. This verifies the inequality \eqref{omega0tauparp}.

Summing over all the balls $B_{R/K}\subset \mathcal{L}_0(B_R)$ on both sides of \eqref{omega0tauparp} and using Minkowski's inequality, we have
\[\|\mathcal{E}^{Parp}_{[0,1]^2}\tilde{g}\|_{L^p(\mathcal{L}_0(B_R))}\leq C_{\varepsilon}R^{\varepsilon}\Big(\sum_{\tilde{\theta}:K^{1/2}R^{-1/2}-square}\| \mathcal{E}^{Parp}_{\tilde{\theta}}\tilde{g}\|^2_{L^p(\mathcal{L}_0(B_R))}\Big)^{1/2}.\]
Taking the inverse change of variables, it follows that
\[\|\mathcal{E}_{\tau}g\|_{L^p(B_R)}\leq C_{\varepsilon}R^{\varepsilon}\Big(\sum_{\theta \subset \tau}\| \mathcal{E}_{\theta}g\|^2_{L^p(w_{B_R})}\Big)^{1/2}.\]
Therefore, we complete the proof of Lemma \ref{lem:omega0tau}.
\end{proof}

Next, we turn to discuss the contribution from $\Omega_3$-part.

\subsection{Decoupling for $\Omega_3$}\label{subsec:3.2}

By rescaling and induction on scales, we can show
\begin{lemma}\label{lem:omega3}
For $2\leq p \leq 4$, there holds
\begin{equation}\label{equ:omega3}
\|\mathcal{E}_{\Omega_3}g\|_{L^p(B_R)}\leq \mathcal{D}_p\big(\tfrac{R}{K}\big)\Big(\sum_{\theta\subset \Omega_3}\|\mathcal
{E}_{\theta}g\|^2_{L^{p}(w_{B_R})}\Big)^{1/2},
\end{equation}
where $\theta\in\mathcal{F}_3(R,4).$
\end{lemma}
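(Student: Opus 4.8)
The plan is to exploit the self-similar structure of the corner region: after an anisotropic rescaling, $\Omega_3=[0,K^{-1/4}]^2$ again carries exactly the surface $F_4^2$, so the decoupling problem at scale $R$ reproduces itself at scale $R/K$, which is precisely what the term $\mathcal{D}_p(R/K)$ encodes. I would start from the change of variables $\xi_1=K^{-1/4}\eta_1$, $\xi_2=K^{-1/4}\eta_2$ recorded in Case 2 of Section \ref{sec:2}; it yields the pointwise identity $|\mathcal{E}_{\Omega_3}g(x)|=|\mathcal{E}_{[0,1]^2}\tilde g(\mathcal{L}(x))|$, where $\mathcal{L}(x)=(K^{-1/4}x_1,K^{-1/4}x_2,K^{-1}x_3)$, $\tilde g(\eta)=K^{-1/2}g(K^{-1/4}\eta)$, and the phase is again $\eta_1^4+\eta_2^4$. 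The first point to check is that the finite-type decomposition is preserved: since $K^{-1/4}\cdot(R/K)^{-1/4}=R^{-1/4}$, the dyadic anchor $I_0$ and the intervals $I_k$, $I_{k,\mu}$ that build $\mathcal{F}_3(R/K,4)$ rescale, under $\xi=K^{-1/4}\eta$, exactly onto those building $\mathcal{F}_3(R,4)$ inside $[0,K^{-1/4}]$; hence $\Omega_3=\bigcup_{\theta\subset\Omega_3}\theta$ and each $\theta\in\mathcal{F}_3(R,4)$ with $\theta\subset\Omega_3$ is the image of a unique $\theta'\in\mathcal{F}_3(R/K,4)$, with $|\mathcal{E}_\theta g(x)|=|\mathcal{E}_{\theta'}\tilde g(\mathcal{L}(x))|$.

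Next I would set $E:=\mathcal{L}(B_R)$, an ellipsoid of dimensions $K^{-1/4}R\times K^{-1/4}R\times K^{-1}R$. Since $K\geq1$, every semi-axis of $E$ is at least $R/K$, so $E$ is covered by a finitely overlapping family of balls $B_{R/K}$. Applying the definition of $\mathcal{D}_p(R/K)$, that is \eqref{equ:defqpr1} with $R$ replaced by $R/K$, on each such ball, raising to the $p$-th power, summing over the family, and using Minkowski's inequality (legitimate since $p/2\geq1$) together with $\sum_{B_{R/K}}w_{B_{R/K}}\lesssim w_E$, I would obtain
\[
\|\mathcal{E}_{[0,1]^2}\tilde g\|_{L^p(E)}\leq C\,\mathcal{D}_p\big(\tfrac{R}{K}\big)\Big(\sum_{\theta'\in\mathcal{F}_3(R/K,4)}\|\mathcal{E}_{\theta'}\tilde g\|_{L^p(w_E)}^2\Big)^{1/2}.
\]
Finally I would undo the change of variables. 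Since $\|\mathcal{E}_{\Omega_3}g\|_{L^p(B_R)}^p=|\det\mathcal{L}|^{-1}\|\mathcal{E}_{[0,1]^2}\tilde g\|_{L^p(E)}^p$, and the same Jacobian $|\det\mathcal{L}|=K^{-3/2}$ appears on the right-hand side after pulling back each $\|\mathcal{E}_{\theta'}\tilde g\|_{L^p(w_E)}$, the Jacobian cancels between the two sides; it remains only to transfer the weight, i.e. to note $w_E(\mathcal{L}(x))\lesssim w_{B_R}(x)$, which holds because $\mathcal{L}$ maps $B_R$ bijectively onto $E$ and contracts more strongly in the vertical variable, so $w_E\circ\mathcal{L}$ decays at least as fast as $w_{B_R}$ in every direction. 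Combining these gives \eqref{equ:omega3}.

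I do not expect a substantive obstacle here: this is a rescaling/self-similarity argument rather than a hard inequality, so the real work lies in the two bookkeeping points above. The first is verifying that the decomposition of $\Omega_3$ at scale $R$ really is the rescaled copy of the full decomposition at scale $R/K$, including the non-dyadic block $I_0$ and the exact ranges of $k$ and $\mu$, so that no spurious cap or constant appears. The second, and more delicate, is the weight manipulation: one must make sure that summing the weights $w_{B_{R/K}}$ over the cover of the elongated ellipsoid $E$ and pulling back through $\mathcal{L}$ is controlled by $w_{B_R}$ and not by $K^{O(1)}w_{B_R}$ — here the polynomial weight $w_{B_R}$ has enough room in its exponent to absorb the $O(1)$ overlap losses — so that the clean recurrence \eqref{outlinerecurrenceineq}, with coefficient $1$ in front of $\mathcal{D}_p(R/K)$, survives the iteration. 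This is exactly where the $L^p$-weighted, rather than sharply localized, formulation of $\mathcal{D}_p$ does its job.
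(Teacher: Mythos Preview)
Your proposal is correct and follows essentially the same rescaling argument as the paper: the change of variables $\xi_i=K^{-1/4}\eta_i$, the cover of the image $\mathcal{L}(B_R)$ by balls of radius $R/K$, the application of the defining inequality for $\mathcal{D}_p(R/K)$ on each ball, summation via Minkowski, and the inverse change of variables. In fact you are more explicit than the paper on the two bookkeeping points it leaves implicit, namely the bijection between $\{\theta\in\mathcal{F}_3(R,4):\theta\subset\Omega_3\}$ and $\mathcal{F}_3(R/K,4)$ under the rescaling, and the control of the pulled-back weight by $w_{B_R}$.
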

\begin{proof}
 Taking the change of variables
\[\eta_i=K^{1/4}\xi_i\;(i=1,2),\]
it follows
\[\vert \mathcal{E}_{\Omega_3}g(x)\vert= \vert \mathcal{E}_{[0,1]^2}\widetilde{g}(\widetilde{x})\vert,\]
where
\[\tilde{x}:=(K^{-1/4}x_1,K^{-1/4}x_2,K^{-1}x_3),\]
and
\[\tilde{g}(\eta):= K^{-1/2}g(K^{-1/4}\eta_1,K^{-1/4}\eta_2).\]
The $\tilde{x}$-variables belong to a rectangular box $\square$ with size
\[\frac{R}{K^{1/4}}\times \frac{R}{K^{1/4}} \times \frac{R}{K},\]
 which can be written into a finitely overlapping union of balls
\[\square = \bigcup B_{R/K}.\]
By the definition of $\mathcal{D}_p(\cdot)$, we  obtain
\[\|\mathcal{E}_{[0,1]^2}\tilde{g}\|_{L^p(B_{R/K})}\leq \mathcal{D}_p\big(\tfrac{R}K\big)\Big(\sum_{\tilde{\theta} \in \mathcal{F}_3(\frac{R}{K},4)} \| \mathcal{E}_{\tilde{\theta}}\tilde{g}\|^2_{L^p(w_{B_{R/K}})}\Big)^{1/2}.\]
Summing over all the cubes $B_{R/K}\subset \square$ and using Minkowski's inequality, we have
\[\|\mathcal{E}_{[0,1]^2}\tilde{g}\|_{L^p(\square)}\leq \mathcal{D}_p\big(\tfrac{R}K\big)\Big(\sum_{\tilde{\theta} \in \mathcal{F}_3(\frac{R}{K},4)}\| \mathcal{E}_{\tilde{\theta}}\tilde{g}\|^2_{L^p(\square)}\Big)^{1/2}.\]
Taking the inverse change of variables, one has
\[\| \mathcal{E}_{\Omega_3}g \|_{L^p(B_R)}\leq \mathcal{D}_p\big(\tfrac{R}K\big)\Big(\sum_{\theta \subset \Omega_3} \| \mathcal{E}_{\theta}g \|^2_{L^p(w_{B_R})}\Big)^{1/2}.\]
Thus, we complete the proof of Lemma \ref{lem:omega3}.
\end{proof}

Finally, we deal with the contribution from $\Omega_1$-part.

\subsection{Decoupling for $\Omega_1$}\label{subs:decoup1}
Recall the decomposition
\begin{align*}
  \Omega_1= & \bigcup_\lambda \Omega_{1,\lambda},\quad
  \Omega_{1,\lambda} = \bigcup_\lambda \bigcup_j\tau_{\lambda}^j=:\bigcup \tau_{\lambda},
\end{align*}
where we abbreviate $\tau_{\lambda}^{j}$ by $\tau_{\lambda}$.
First, by reduction of dimension arguments, we are able to prove the following result.
\begin{lemma}\label{lem:3.3}
For $2\leq p\leq 6$ and any $\varepsilon>0$, there exists a constant $C_\varepsilon$ such that
\begin{equation}\label{omegall}
\| \mathcal{E}_{\Omega_1}g \|_{L^p(B_R)}\leq C_{\varepsilon}K^{\varepsilon}\Big(\sum_{\lambda}\sum_{\tau_{\lambda} \subset \Omega_{1,\lambda}} \| \mathcal{E}_{\tau_{\lambda}}g \|^2_{L^p(w_{B_R})}\Big)^{1/2}.
\end{equation}
\end{lemma}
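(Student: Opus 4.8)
The plan is to decouple $\Omega_1$ into the vertical strips $\tau_\lambda$ in two stages: first reduce the problem to a decoupling in the $\xi_1$-variable alone (a "reduction of dimension"), then apply the one-dimensional $\ell^2$ decoupling for the parabola, i.e. Bourgain--Demeter \eqref{equ:bd1} with $n=2$. The first observation is that on $\Omega_1 = [K^{-1/4},1]\times[0,K^{-1/4}]$ the phase $\xi_1^4+\xi_2^4$ has $\xi_2^4 = O(K^{-1})$, which is negligible at scale $K$; so modulo an acceptable error the extension operator $\mathcal{E}_{\Omega_1}g$ on a ball $B_K$ behaves like the extension operator for the cylinder $\{(\xi_1,\xi_2,\xi_1^4)\}$, whose decoupling into strips reduces to the one-variable problem. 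To be careful about this I would first localize to balls $B_K\subset B_R$ and argue that, since $\tau_\lambda$ has $\xi_1$-width $\lambda^{-1}K^{-1/2}$ and the strips $\tau_\lambda$ cover $\Omega_{1,\lambda}$ while the $\Omega_{1,\lambda}$ are finitely overlapping in $\lambda$, it suffices to prove, for each fixed $\lambda$,
\begin{equation*}
\| \mathcal{E}_{\Omega_{1,\lambda}}g \|_{L^p(B_R)}\leq C_{\varepsilon}K^{\varepsilon}\Big(\sum_{\tau_{\lambda} \subset \Omega_{1,\lambda}} \| \mathcal{E}_{\tau_{\lambda}}g \|^2_{L^p(w_{B_R})}\Big)^{1/2},
\end{equation*}
and then sum in $\lambda$ (there are $O(\log R)$ values of $\lambda$, but these are $\lesssim K^{\varepsilon'}$, or one runs Cauchy--Schwarz once more against $\log K$).

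Next I would carry out the reduction of dimension. Write points of $B_R$ as $(x_1,x_2,x_3)$; the key point is that the relevant frequency variables entering the $\tau_\lambda$-decoupling are $(\xi_1,\xi_1^4)$, so the decoupling genuinely takes place in the $(x_1,x_3)$-plane, with $x_2$ (dual to $\xi_2$) and the $\xi_2$-direction playing the role of inert parameters. Concretely, I would freeze $\xi_2$ and $x_2$, view $\mathcal{E}_{\Omega_{1,\lambda}}g(x)$ as a superposition over $\xi_2$ of two-dimensional extension operators $\int g(\xi_1,\xi_2)e(x_1\xi_1 + x_3(\xi_1^4+\xi_2^4))\,d\xi_1$ in the variables $(x_1,x_3)$, whose frequency support lies in a neighborhood of the parabola-like curve $\xi_1\mapsto(\xi_1,\xi_1^4)$ rescaled to $[\lambda,2\lambda]$. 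After a parabolic-type rescaling $\xi_1 = \lambda + \eta_1/(\lambda K^{1/2})$ — the same change of variables as in \eqref{casecchangeofvariables} — the curve becomes a standard parabola at scale $K$ (its higher derivatives being controlled exactly as recorded after \eqref{equ:gtildef}), and \eqref{equ:bd1} with $n=2$ gives the decoupling of $[\lambda,2\lambda]$ into intervals of length $\lambda^{-1}K^{-1/2}$ — i.e. exactly the strips $\tau_\lambda$ — with constant $C_\varepsilon K^\varepsilon$ on every ball $B_K$. Integrating back in the frozen variables $(x_2,\xi_2)$ (Minkowski's inequality in $L^p$, $p\ge 2$, to pull the $\ell^2$ sum outside the integral), and then summing over the finitely overlapping balls $B_K\subset B_R$, yields the displayed estimate for fixed $\lambda$. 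Because \eqref{equ:bd1} for the parabola holds for $2\le p\le 6$, the whole argument runs in that range, which is why the lemma is stated for $2\le p\le 6$ even though Theorem \ref{thm:main} only needs $p\le 4$.

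The step I expect to be the main obstacle is making the "reduction of dimension" rigorous rather than heuristic: one must justify that $\mathcal{E}_{\Omega_{1,\lambda}}g$ really can be treated as a $\xi_2$-indexed family of two-dimensional extension operators to which planar decoupling applies uniformly, and that the uniformity in the frozen parameter $\xi_2$ costs nothing. The cleanest way is to invoke the cylinder/lower-dimensional decoupling principle (decoupling for $S\times\mathbb{R}$ follows from decoupling for $S$ by Fubini and the fact that the extra flat direction is untouched by the caps), applied to the curve $\xi_1\mapsto(\xi_1,\xi_1^4)$ after rescaling; the weight $w_{B_R}$ must be handled with the usual locally-constant / Minkowski argument when passing from $B_K$-local estimates to the global $B_R$ estimate, and one should check the rescaling maps $B_R$ into a box that is a finitely overlapping union of $B_K$'s (its dimensions being $\lambda^{-1}K^{-1/2}R \times R \times K^{-1}R$, all $\ge K \cdot R^{\,0}$ after the normalization), exactly as in the proof of Lemma \ref{lem:omega0tau}. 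None of these are deep, but assembling them so the constant is genuinely $C_\varepsilon K^\varepsilon$ and not $C_\varepsilon R^\varepsilon$ — which is what makes the recurrence \eqref{outlinerecurrenceineq} close — is the delicate bookkeeping.
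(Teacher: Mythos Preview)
Your approach is essentially the same as the paper's: reduce dimension by slicing in $x_2$, apply a two-dimensional decoupling for the curve $\{(t,t^4):t\in[\lambda,2\lambda]\}$ at scale $K$, integrate back in $x_2$ via Minkowski, and sum over $B_K\subset B_R$. Two small points of comparison are worth noting.

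First, the paper does \emph{not} freeze $\xi_2$; it freezes only $x_2$. Writing $F=\check\varphi_{K^{-1}}\mathcal{E}_{\Omega_{1,\lambda}}g$ and $G=F(\cdot,x_2,\cdot)$, the two-variable Fourier transform of $G$ is supported in the projection of $\operatorname{supp}\hat F$ onto the plane $\xi_2=0$, which lands in $\mathcal{N}_{K^{-1}}(\Gamma_\lambda)$ precisely because $\xi_2^4\le K^{-1}$ on $\Omega_1$. This is the rigorous version of your heuristic ``$\xi_2^4=O(K^{-1})$ is negligible at scale $K$''; your phrasing ``freeze $\xi_2$ and $x_2$, view $\mathcal{E}_{\Omega_{1,\lambda}}g$ as a superposition over $\xi_2$'' does not directly yield a decoupling, since one cannot in general decouple inside a $\xi_2$-integral and then pull the $\ell^2$ sum outside. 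Your final paragraph correctly identifies the fix (the cylinder/Fubini principle), and that is exactly what the paper implements.

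Second, for the planar decoupling step the paper quotes Yang's lemma for $\Gamma_\lambda=\{(t,t^4):t\in[\lambda,2\lambda]\}$ as a black box (Lemma~\ref{lem:gamwideg}), rather than rescaling to a perturbed parabola and invoking \eqref{equ:bd1} as you propose. The two routes are equivalent in content; your rescaling is in fact how one proves Yang's lemma, so nothing is gained or lost either way.
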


To prove Lemma \ref{lem:3.3}, we employ the following lemma from \cite{Yang21}.
\begin{lemma}\label{lem:gamwideg}
Let $\Gamma_{\lambda}=\{(t,t^4): t\in [\lambda,2\lambda]\}$ and $2\leq p\leq 6$. For any $\varepsilon>0$, there holds
\begin{equation}\label{equ:gamgthet}
  \|G\|_{L^p(\mathbb{R}^2)}\leq C_{\varepsilon}K^{\varepsilon} \Big(\sum_{\tau}\Vert G_{\tau}\Vert^2_{L^p(\mathbb{R}^2)}\Big)^{1/2},\;G_{\tau}:=\mathcal F^{-1}(\widehat{G}\chi_{\tau}),
\end{equation}
where ${\rm supp}\;\hat{G}\subset \mathcal{N}_{K^{-1}}(\Gamma_{\lambda})$ and $\tau$'s are $\lambda^{-1}K^{-\frac12}\times K^{-1}$-rectangles.
\end{lemma}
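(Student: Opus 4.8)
\textbf{Proof proposal for Lemma \ref{lem:gamwideg}.}

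The plan is to reduce the decoupling for the finite-type arc $\Gamma_\lambda=\{(t,t^4):t\in[\lambda,2\lambda]\}$ at scale $K^{-1}$ to the already-understood decoupling for a non-degenerate (parabola-type) curve, by means of a rescaling that straightens the varying curvature on $[\lambda,2\lambda]$. Since $\lambda\in[K^{-1/4},\tfrac12]$, on the dyadic block $[\lambda,2\lambda]$ one has $\tfrac{d^2}{dt^2}(t^4)=12t^2\sim\lambda^2$, so the arc $\Gamma_\lambda$ genuinely looks like a parabola of curvature $\sim\lambda^2$; the affine map $t=\lambda+\lambda^{-1}K^{-1/2}s$ (together with the corresponding linear shear in the frequency variable that kills the constant and linear Taylor terms of $t^4$) sends $[\lambda,2\lambda]$ to an interval of length $\sim\lambda^2 K^{1/2}$, sends the $K^{-1}$-neighborhood of $\Gamma_\lambda$ to the $O(1)$-neighborhood of a curve $\{(s,\phi(s))\}$ with $\phi''\sim 1$ and all higher derivatives $O(1)$, and sends the target rectangles $\tau$ (of size $\lambda^{-1}K^{-1/2}\times K^{-1}$) to unit-scale caps. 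So after rescaling the statement becomes: an $\ell^2$ decoupling for a perturbed parabola at scale $(\lambda^2 K^{1/2})^{-2}$-ish, i.e. the interval $[0,\lambda^2K^{1/2}]$ decomposed into unit caps, which by the reciprocal-scaling convention is exactly \eqref{equ:bd1} with $n=2$ applied to a $C^2$-perturbation of $P^1$ at spatial scale $R'\sim(\lambda^2K^{1/2})^2\le K$. This gives a bound $C_\varepsilon (R')^\varepsilon\le C_\varepsilon K^\varepsilon$, which is the claimed $K^\varepsilon$ loss.

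The steps, in order, are: (i) record the Taylor expansion of $t^4$ around $t=\lambda$ and perform the change of variables $t=\lambda+\lambda^{-1}K^{-1/2}s$, absorbing the affine part of the phase into a linear change of the physical variable, so that $|G(x)|$ becomes $|\tilde G(\tilde x)|$ for an extension-type object attached to $\{(s,\phi(s))\}$, $s\in[0,\lambda^2K^{1/2}]$, with $\phi$ non-degenerate and uniformly $C^4$; (ii) check that $\tau$-rectangles go to $O(1)\times O(1)$ caps and that $\mathcal N_{K^{-1}}(\Gamma_\lambda)$ goes into an $O(1)$-neighborhood of the rescaled curve — equivalently, verify $\lambda^{-1}K^{-1/2}\cdot\lambda^{-1}K^{-1/2}\cdot(\text{curvature }\lambda^2)=K^{-1}$, so the caps are at the critical "$R'^{-1/2}$" scale; (iii) invoke the planar decoupling \eqref{equ:bd1} (with the harmless perturbation of $P^1$, which follows from Taylor's theorem plus the affine invariance of decoupling exactly as in the treatment of the perturbed paraboloid in Section 7 of \cite{BD15}) at the spatial scale $R'\sim(\lambda^2K^{1/2})^2$; (iv) undo the change of variables and translate the resulting $\ell^2$ sum back to the original $\tau$'s, noting $(R')^\varepsilon\le K^{\varepsilon}$ since $\lambda\le\tfrac12$ forces $R'\lesssim K$. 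One should also note the range: the planar $\ell^2$ decoupling for a curve of nonvanishing curvature holds for $2\le p\le 6=\tfrac{2(n+1)}{n-1}|_{n=2}$, which is precisely the stated range.

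The main point requiring care — rather than a genuine obstacle — is tracking the scales through the anisotropic rescaling: one must confirm that the single parameter $\lambda$ simultaneously (a) normalizes the curvature of the arc to $\sim 1$, (b) sends the fixed frequency-thickness $K^{-1}$ to the thickness $\sim (R')^{-1}$ dictated by the new spatial ball $B_{R'}$, and (c) sends the prescribed $\tau$-caps to the canonical $(R')^{-1/2}$-caps — all three being consistent only because $R'$ is forced to equal $(\lambda^2K^{1/2})^2$. A secondary issue is that $\phi$ is merely a perturbation of $\tfrac12 s^2$ (it is a genuine quartic in $s$ with small top coefficients $\lambda^{-4}K^{-1}$ and $\lambda^{-2}K^{-1/2}\le 1$), so one cannot literally cite decoupling for the parabola; but this is exactly the situation handled by the Bourgain–Demeter perturbation argument, so it costs only another $C_\varepsilon (R')^\varepsilon$. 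Finally, since the hypothesis $\lambda\ge K^{-1/4}$ is what keeps $\phi''\sim 1$ bounded below (it guarantees the number of $\tau$'s, namely $\lambda^2 K^{1/2}\ge 1$, is at least one and the perturbation coefficients stay $\lesssim 1$), I would state that dependence explicitly when setting up the rescaling.
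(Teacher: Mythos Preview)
Your approach is correct, and in fact it is more explicit than what the paper does: the paper simply cites this lemma from \cite{Yang21} without proof. Your rescaling argument---normalize the curvature on $[\lambda,2\lambda]$ via $t=\lambda+\lambda^{-1}K^{-1/2}s$, obtain a perturbed parabola $\phi(s)=6s^2+4\lambda^{-2}K^{-1/2}s^3+\lambda^{-4}K^{-1}s^4$ with $\phi''\sim 1$ and bounded higher derivatives (thanks to $\lambda\ge K^{-1/4}$), and then invoke the $n=2$ case of \eqref{equ:bd1} at scale $R'=\lambda^4K\le K$---is exactly the same mechanism the paper itself uses elsewhere, namely in the verification of \eqref{omega0tauparp} and in Lemma~\ref{lem:curve1new}. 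So your proof is not only valid but also internally consistent with the rest of the paper; it avoids the black-box citation and supplies the missing details. The one cosmetic point: your $R'$ should be read as $\lambda^4K$ (you wrote $(\lambda^2K^{1/2})^2$, which is the same thing), and since $\lambda\le\tfrac12$ this indeed gives $(R')^\varepsilon\lesssim K^\varepsilon$ uniformly in $\lambda$, which is what the lemma requires.
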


With Lemma \ref{lem:gamwideg} in hand, we prove Lemma \ref{lem:3.3} by freezing the $x_2$ variable as follows. Fix a bump function $\varphi \in C^{\infty}_c(\mathbb{R}^3)$ with ${\rm supp}\; \varphi \subset B(0,1)$ and $\vert \check{\varphi}(x)\vert \geq 1$ for all $x \in B(0,1)$. Define $F:= \check{\varphi}_{K^{-1}}\mathcal{E}_{\Omega_1}g$, where $\varphi_{K^{-1}}(\xi):= K^3 \varphi(K\xi),\; \xi \in \mathbb{R}^3$.
We denote $F(\cdot,x_2,\cdot)$ by $G$. From Guth \cite{Guth18}, it is easy to see that ${\rm supp}\;\hat{G}$ is contained in the projection of ${\rm supp}\;\hat{F}$ on the plane $\xi_2=0$, that is, in the $K^{-1}$-neighborhood of $\Gamma_{\lambda}$. We employ Lemma \ref{lem:gamwideg} to get
\[ \|G\|_{L^p(\mathbb{R}^2)}\leq C_{\varepsilon}K^{\varepsilon} \Big(\sum_{\tau}\Vert G_{\tau}\Vert^2_{L^p(\mathbb{R}^2)}\Big)^{1/2},\]
i.e.,
\[\|F(\cdot,x_2,\cdot)\|_{L^p(\mathbb{R}^2)}\leq C_{\varepsilon}K^{\varepsilon} \Big(\sum_{\tau}\Vert F_{\tau}(\cdot,x_2,\cdot)\Vert^2_{L^p(\mathbb{R}^2)}\Big)^{1/2},\]
where $F_{\tau}(x):=\check{\varphi}_{K^{-1}}\mathcal{E}_{\tau}g$.
Integrating on both sides of the above inequality with respect to $x_2$-variable from $-\infty$ to $\infty$, we derive
\[\|F\|_{L^p(\mathbb{R}^3)}\leq C_{\varepsilon}K^{\varepsilon} \Big(\sum_{\tau}\Vert F_{\tau}\Vert^2_{L^p(\mathbb{R}^3)}\Big)^{1/2}.\]
Thus, we have
\begin{align*}
  \| \mathcal{E}_{\Omega_{1,\lambda}}g \|_{L^p(B_K)}\lesssim& \|F\|_{L^p(\mathbb{R}^3)}
\leq C_{\varepsilon}K^{\varepsilon} \Big(\sum_{\tau}\Vert F_{\tau}\Vert^2_{L^p(\mathbb{R}^3)}\Big)^{1/2}\\
\leq& C_{\varepsilon}K^{\varepsilon}\Big(\sum_{\tau_{\lambda} \subset \Omega_{1,\lambda}} \| \mathcal{E}_{\tau}g \|^2_{L^p(w_{B_K})}\Big)^{1/2}.
\end{align*}
It follows that
\[\| \mathcal{E}_{\Omega_1}g \|_{L^p(B_K)}\leq C_{\varepsilon}K^{\varepsilon}\Big(\sum_{\lambda}\sum_{\tau_{\lambda} \subset \Omega_{1,\lambda}} \| \mathcal{E}_{\tau_{\lambda}}g \|^2_{L^p(w_{B_K})}\Big)^{1/2}.\]
Summing over all the balls $B_K\subset B_R$, we get the inequality \eqref{omegall}, as required.
\vskip 0.2in
Next, we estimate the $\| \mathcal{E}_{\tau_{\lambda}}g \|_{L^p(B_R)}$ on the right hand side of \eqref{omegall}.
For this purpose, we consider the following surfaces:
\begin{equation}\label{equ:f224def}
F^2_{2,4}:=\{(\xi_1,\xi_2,\phi_1(\xi_1)+\xi_2^4):(\xi_1,\xi_2)\in [0,1]^2\},
\end{equation}
where $\phi_1(\xi_1)$ satisfies $\phi_1''\sim 1$, $\vert \phi_1^{(3)} \vert \lesssim 1, \vert \phi_1^{(4)} \vert \lesssim 1$ and  $\phi_1^{(\ell)} =0,\; \ell \geq 5$ on $[0,1]$.
Now, we construct the decomposition of $[0,1]^2$ associated with the surfaces $F^2_{2,4}$. We first divide $[0,1]$ into
\begin{equation*}
  [0,1]=\bigcup_k I_k,
\end{equation*}
where $I_0=[0,R^{-\frac14}]$ and
$$I_k=[2^{k-1}R^{-\frac14},2^{k}R^{-\frac14}], \;\;\text{for}\;\; 1\leq k \leq \big[\tfrac{1}{4}\log_{2}R\big].$$
Furthermore, for each $k\geq 1$, we divide $I_k$ into
\[I_k=\bigcup_{\mu=1}^{2^{2(k-1)}} I_{k,\mu},\]
with
$$I_{k,\mu}=\big[2^{k-1}{R^{-\frac14}}+(\mu-1)2^{-(k-1)}R^{-\frac14},2^{k-1}R^{-\frac14}+ \mu 2^{-(k-1)}R^{-\frac14}\big].$$
Then, we have the following decomposition
\begin{equation}\label{equ:01theta}
  [0,1]^{2}=\bigcup \tilde{\theta},
\end{equation}
where $\tilde{\theta}\in \mathcal{F}_3(R,2,4)$ and
\begin{align*}
   \mathcal{F}_3(R,2,4):=\Big\{&[a, a+R^{-1/2}]\times I_{k,\mu},\;[a, a+R^{-1/2}]\times I_0:\\
   & a \in [0, 1-R^{-1/2}]\cap R^{-1/2}\mathbb{Z},
   \quad 1\leq k \leq \big[\tfrac{1}{4}\log_{2}R\big],\;1\leq \mu \leq 2^{2(k-1)}\Big\}.
\end{align*}

We establish the decoupling inequality  for the surfaces $F_{2,4}^2$ associated with the decomposition \eqref{equ:01theta} as follows.

\begin{lemma}\label{lem:24}
For $2\leq p \leq 4$ and any $\varepsilon>0$, there exists a constant $C_\varepsilon$ such that
\begin{equation}\label{surfacenew}
 \| \tilde{\mathcal{E}}_{[0,1]^2}f \|_{L^p(B_R)}\leq C_{\varepsilon}R^{\varepsilon}\Big(\sum_{\tilde{\theta} \in \mathcal{F}_3(R,2,4)} \|\tilde{\mathcal{E}} _{\tilde{\theta}}f \|^2_{L^p(w_{B_R})}\Big)^{1/2},
\end{equation}
where $\tilde{\mathcal{E}}_{[0,1]^2}$ denotes the Fourier extension operator associated with the surfaces $F^2_{2,4}$.
\end{lemma}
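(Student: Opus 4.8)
The plan is to split the domain $[0,1]^2$ according to whether the $\xi_2$-variable is bounded away from the origin, writing $[0,1]^2 = D_0 \cup D_1$ with $D_0 = [0,1]\times[R^{-1/4},1]$ and $D_1 = [0,1]\times[0,R^{-1/4}]$. By the triangle inequality and Cauchy--Schwarz it suffices to decouple each piece separately. On $D_0$ the surface $F^2_{2,4}$ has nonvanishing Gaussian curvature: since $\phi_1'' \sim 1$ and $(\xi_2^4)'' = 12\xi_2^2 \gtrsim R^{-1/2}$ there, the surface is, at scale $R^{-1/2}$, a small perturbation of a paraboloid after a trivial affine rescaling in $\xi_2$ (by $R^{1/8}$ on dyadic blocks $\xi_2 \sim 2^k R^{-1/4}$). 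Thus on each dyadic strip $[0,1]\times[2^{k-1}R^{-1/4},2^kR^{-1/4}]$ one runs exactly the $K_p(R)\lesssim_\varepsilon R^\varepsilon$ bootstrap argument from the proof of Lemma \ref{lem:omega0tau} — reduce to $R^{-1/3}$-slabs where the surface lies in $\mathcal N_{R^{-1}}(P^2)$, apply \eqref{equ:bd1} for the paraboloid with $n=3$ together with parabolic rescaling — and then sum the $O(\log R)$ dyadic contributions, which costs only a harmless power of $\log R \lesssim_\varepsilon R^\varepsilon$. This produces the decoupling of $\|\tilde{\mathcal E}_{D_0}f\|_{L^p(B_R)}$ into the caps $\tilde\theta \in \mathcal F_3(R,2,4)$ contained in $D_0$.

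For the contribution of $D_1 = [0,1]\times[0,R^{-1/4}]$ the curvature in $\xi_2$ is too weak to use directly, so the plan is induction on scales. Partition $D_1$ into $R^{-1/2}\times R^{-1/4}$-rectangles $\nu$; a typical one is $\nu = [0,R^{-1/2}]\times[0,R^{-1/4}]$. Two observations drive the argument. First, the $R^{-1/2}$-by-$R^{-1/4}$ family $\{\nu\}$ is itself a decomposition of a thin strip, and the surface over such a strip, after the rescaling $\xi_1 = \eta_1/R^{1/4}$ (or whatever the block width dictates), $\xi_2 = \eta_2/R^{1/4}$, reproduces a phase of exactly the admissible form $\psi_1(\eta_1,\eta_2) = \tilde\phi_1(\eta_1) + \eta_2^4$ with $\tilde\phi_1'' \sim 1$, $|\tilde\phi_1^{(3)}|,|\tilde\phi_1^{(4)}| \lesssim 1$, $\tilde\phi_1^{(\ell)}=0$ for $\ell\ge5$ — this is the closedness property noted in Case 3 of Section \ref{sec:2}. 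Hence $\|\tilde{\mathcal E}_\nu f\|_{L^p(B_R)}$ is controlled, by the definition of the best constant in \eqref{surfacenew} at scale $R/\sqrt R = \sqrt R$ (more precisely at the rescaled ball radius, which is a genuinely smaller power of $R$), by the decoupling into the $\tilde\theta$'s inside $\nu$. Second, the outer decoupling of $\|\tilde{\mathcal E}_{D_1}f\|$ into the pieces $\{\nu\}$ is a flat/lower-dimensional problem: freezing the $x_2$-variable as in the proof of Lemma \ref{lem:3.3}, the relevant one-variable object is the curve $\{(\xi_1,\phi_1(\xi_1))\}$, for which the nondegenerate $\ell^2$ decoupling of Bourgain--Demeter \eqref{equ:bd1} with $n=2$ applies (after reintegrating in $x_2$), giving the decoupling of $D_1$ into $R^{-1/2}$-strips in $\xi_1$ at cost $R^\varepsilon$; combined with the trivial $\xi_2$-splitting into the $O(1)$... wait — into $R^{-1/4}$-width in $\xi_2$ there is only one interval, so no loss there. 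Chaining the outer decoupling into $\nu$'s with the rescaled inner decoupling yields $\|\tilde{\mathcal E}_{D_1}f\| \lesssim_\varepsilon R^\varepsilon (\sum_{\tilde\theta\subset D_1}\|\tilde{\mathcal E}_{\tilde\theta}f\|^2)^{1/2}$, and the induction on scales closes as in the recurrence \eqref{outlinerecurrenceineq}.

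Combining the $D_0$ and $D_1$ estimates via $\|\tilde{\mathcal E}_{[0,1]^2}f\| \le 2(\|\tilde{\mathcal E}_{D_0}f\|^2 + \|\tilde{\mathcal E}_{D_1}f\|^2)^{1/2}$ gives \eqref{surfacenew}. The main obstacle I anticipate is the bookkeeping in the $D_1$ induction: one must verify that the rescaling map sending $\nu$ to $[0,1]^2$ genuinely carries the decomposition $\mathcal F_3(R,2,4)$ restricted to $\nu$ onto $\mathcal F_3(R',2,4)$ at the smaller scale $R'$ (so that the inductive hypothesis is literally applicable), and that the phase stays in the admissible class with uniform constants under iteration — in particular that the third- and fourth-derivative bounds $|\tilde\phi_1^{(3)}|,|\tilde\phi_1^{(4)}|\lesssim 1$ are preserved and do not degrade after many rescalings. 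A secondary point requiring care is the interaction of the weight functions $w_{B_R}$ under the affine changes of variables, which is routine but must be tracked so that the $L^p(w_{B_R})$ norms on the right-hand side are the correct ones.
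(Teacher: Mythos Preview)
Your high-level architecture matches the paper's --- split $[0,1]^2$ according to the size of $\xi_2$, treat the nondegenerate part via the perturbed-paraboloid bootstrap and the degenerate strip via a reduction-of-dimension step --- but you have chosen the wrong threshold, and this creates a genuine gap in your $D_0$ argument.

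The paper splits at $\xi_2 = K^{-1/4}$ with $K\approx\log R$, not at $\xi_2 = R^{-1/4}$. With the paper's split, $D_0=[0,1]\times[K^{-1/4},1]$ is first \emph{trivially} decoupled at scale $K$ into rectangles $\nu$ of size $K^{-1/2}\times\sigma^{-1}K^{-1/2}$ (cost $K^{O(1)}\lesssim_\varepsilon R^\varepsilon$), and only then does one rescale each $\nu$ to $[0,1]^2$ and run the $K_p(R)$ bootstrap. The point of the preliminary trivial step is that after rescaling such a $\nu$, the resulting phase $\psi_0(\eta_1,\eta_2)=\phi_1(\eta_1)+\phi_2(\eta_2)$ has \emph{uniformly bounded} third and fourth derivatives. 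By contrast, on your full dyadic strip $[0,1]\times[\sigma,2\sigma]$ with $\sigma$ as small as $R^{-1/4}$, the two principal curvatures are $\sim 1$ and $\sim\sigma^2$; any affine rescaling in $\xi_2$ that normalises the second derivative of $\xi_2^4$ to size $\sim 1$ forces the third derivative to be $\sim\sigma^{-2}$, which can be as large as $R^{1/2}$. The Taylor step in the bootstrap (``reduce to $R^{-1/3}$-slabs where the surface lies in $\mathcal N_{R^{-1}}(P^2)$'') then fails: the cubic remainder on an $R^{-1/3}$-slab is $\sim\sigma^{-2}R^{-1}$, not $\lesssim R^{-1}$. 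So you cannot ``run exactly'' the argument from Lemma~\ref{lem:omega0tau} on the whole strip; you must first break the strip into boxes of the correct aspect ratio (roughly $\sigma^2\times\sigma$), and that preliminary decoupling is precisely what the paper accomplishes with its $K$-scale trivial step.

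Your $D_1$ discussion is also internally inconsistent with your choice of threshold. With $D_1=[0,1]\times[0,R^{-1/4}]$, the rectangles $\nu$ of size $R^{-1/2}\times R^{-1/4}$ are already the final caps $\tilde\theta\in\mathcal F_3(R,2,4)$, so there is no ``rescaled inner decoupling'' to perform and no induction to close --- the reduction-of-dimension step alone finishes $D_1$. The induction on scales in the paper is real precisely because the paper's $\nu$'s have size $K^{-1/2}\times K^{-1/4}$, which are intermediate; rescaling such a $\nu$ to $[0,1]^2$ yields a phase of the same admissible form over a ball of radius $R/K$, whence the recurrence $\tilde{\mathcal D}_p(R)\le C(\varepsilon)K^{O(1)}R^\varepsilon + C_\varepsilon K^\varepsilon\tilde{\mathcal D}_p(R/K)$, which one iterates $[\log_K R]$ times. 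Your sketch conflates this with a one-shot argument; either commit to the paper's intermediate scale $K$ and carry out the genuine induction, or keep your $R^{-1/4}$ threshold but supply the missing $\xi_1$-decoupling step on each dyadic strip of $D_0$.
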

Assume that Lemma \ref{lem:24} holds for a while, for each $\lambda$ and any $\tau_{\lambda} \subset \Omega_{1,\lambda}$, we claim that
\begin{equation}\label{omegas1}
\| \mathcal{E}_{\tau_{\lambda}} g \|_{L^p(B_R)}
\leq
 C_{\varepsilon}\big(\tfrac{R}{K}\big)^{\varepsilon}
\Big(
\sum_{\theta \subset \tau_{\lambda}}
\| \mathcal{E}_{\theta}g \|^2_{L^p(w_{B_R})}
\Big)^{1/2},\quad 2\leq p \leq 4,
\end{equation}
where $\theta\in\mathcal{F}_3(R,4).$ Without loss of generality, we may
assume that $\tau_{\lambda} = [\lambda, \lambda + \lambda^{-1}K^{-1/2}]\times [0, K^{-1/4}]$.
Taking the change of variables
\begin{equation}\label{omega1tau}
\xi_1= \lambda+ \lambda^{-1}K^{-1/2}\eta_1,\; \xi_2= K^{-1/4}\eta_2,
\end{equation}
we get
\[\|\mathcal{E}_{\tau}g\|^p_{L^p(B_R)}=\lambda K^{7/4} \|\tilde{\mathcal{E}}_{[0,1]^2}\tilde{g}\|^p_{L^p(\mathcal{L}_{\lambda}(B_R))},\]
where
\[\tilde{g}(\eta_1,\eta_2)= \lambda^{-1}K^{-3/4}g(\lambda+ \lambda^{-1}K^{-1/2}\eta_1, K^{-1/4}\eta_2).\]
Here $\mathcal{L}_{\lambda}$ denotes the map
$$(x_1,x_2,x_3)\mapsto\big(\lambda^{-1}K^{-\frac12}x_1+4\lambda^2K^{-\frac12}x_3,K^{-\frac14}x_2,K^{-1}x_3\big),$$
and
$\mathcal{L}_{\lambda}(B_R)$ denotes the image of $B_R$ with size roughly
\[\lambda^{-1/2}K^{-1/2}R \times K^{-1/4}R\times K^{-1}R.\]
We rewrite $\mathcal{L}_{\lambda}(B_R)$ into a finitely overlapping union of balls as follows
\[\mathcal{L}_{\lambda}(B_R)=\bigcup B_{R/K}.\]
Now, we employ Lemma \ref{lem:24} on each $B_{R/K}$ to estimate the
\[\|\tilde{\mathcal{E}}_{[0,1]^2}\tilde{g}\|_{L^p(B_{R/K})}.\]
To do this, we need verify that for $\theta \subset \tau_{\lambda}$, its image $\tilde{\theta}$ belongs to $\mathcal{F}_3(\frac{R}{K},2,4)$ under the change of variables \eqref{omega1tau}. We only need to consider the following two  cases:

{\bf Case(a): $\theta\subset [\lambda,2\lambda]\times[0, R^{-1/4}]$.} In this case, we may assume that $$\theta = [\lambda, \lambda+\lambda^{-1}R^{-1/2}]\times[0, R^{-1/4}].$$
Under the change of variables \eqref{omega1tau}, we see that
\[\tilde{\theta}=[0, K^{1/2}R^{-1/2}]\times [0, K^{1/4}R^{-1/4}]\in \mathcal{F}_3\big(\tfrac{R}{K},2,4\big).\]

{\bf Case(b): $\theta \subset [\lambda,2\lambda]\times[\sigma, 2\sigma].$} In this case, we may assume that  $$\theta = [\lambda, \lambda+\lambda^{-1}R^{-1/2}]\times[\sigma, \sigma+\sigma^{-1}R^{-1/2}],$$ where $\sigma$ is a dyadic number with $R^{-1/4}\leq \sigma \leq \frac{1}{2}$.
Under the change of variables \eqref{omega1tau}, one has
\[\tilde{\theta}=\big[0, \big(\tfrac{R}{K}\big)^{-1/2}\big]\times \big[\tilde{\sigma}, \tilde{\sigma}+\tilde{\sigma}^{-1}\big(\tfrac{R}{K}\big)^{-1/2}\big]\in \mathcal{F}_3\big(\tfrac{R}{K},2,4\big),\]
where $\tilde{\sigma}:= K^{1/4}\sigma$ is also a dyadic number.

By Lemma \ref{lem:24}, we obtain
\[\|\tilde{\mathcal{E}}_{[0,1]^2}\tilde{g}\|_{L^p(B_{R/K})}\leq C_{\varepsilon}\big(\tfrac{R}{K}\big)^{\varepsilon}\Big(\sum_{\tilde\theta\in \mathcal{F}_3(\frac{R}{K},2,4)}\| \tilde{\mathcal{E}}_{\tilde\theta}\tilde{g}\|^2_{L^p(w_{B_{R/K}})}\Big)^{1/2}.\]
Summing over all the balls $B_{R/K}\subset \mathcal{L}_{\lambda}(B_R)$ and using Minkowski's inequality, we have
\[\|\tilde{\mathcal{E}}_{[0,1]^2}\tilde{g}\|_{L^p(\mathcal{L}_{\lambda}(B_R))}\leq C_{\varepsilon}\big(\tfrac{R}{K}\big)^{\varepsilon}\Big(\sum_{\tilde\theta\in \mathcal{F}_3(\frac{R}{K},2,4)}\| \tilde{\mathcal{E}}_{\tilde\theta}\tilde{g}\|^2_{L^p(\mathcal{L}_{\lambda}(B_R))}\Big)^{1/2}.\]
Taking the inverse change of variables, we deduce that
\[\|\mathcal{E}_{\tau_{\lambda}}g\|_{L^p(B_R)}\leq C_{\varepsilon}\big(\tfrac{R}{K}\big)^{\varepsilon}\Big(\sum_{\theta \subset \tau}\| \mathcal{E}_{\theta}g\|^2_{L^p(w_{B_R})}\Big)^{1/2}.\]
Thus, we prove the claim \eqref{omegas1}.

Plugging \eqref{omegas1} into \eqref{omegall}, one has
\begin{align}\nonumber
  \| \mathcal{E}_{\Omega_1}g \|_{L^p(B_R)}\leq& C_{\varepsilon}K^{\varepsilon}\big(\tfrac{R}{K}\big)^{\varepsilon}\Big(\sum_{\lambda}\sum_{\tau_{\lambda}}\sum_{\theta \subset \tau_{\lambda}} \| \mathcal{E}_{\theta}g \|^2_{L^p(w_{B_R})}\Big)^{1/2}\\\label{equ:controme1est}
  =& C_{\varepsilon}R^{\varepsilon}\Big(\sum_{\theta \subset \Omega_1} \| \mathcal{E}_{\theta}g \|^2_{L^p(w_{B_R})}\Big)^{1/2},
\end{align}
where $\theta \in \mathcal{F}_3(R,4)$.
Therefore, we obtain the contribution from $\Omega_1$-part under the assumption of  Lemma \ref{lem:24}.

Now, we return to prove Lemma \ref{lem:24}.

\begin{proof}[{\bf Proof of Lemma \ref{lem:24}:}]
Denote the least constant such that \eqref{surfacenew} holds by $\tilde{\mathcal{D}}_p(R)$. Divide $[0,1]^2$ into $[0,1]^2=D_0 \bigcup D_1$, where
$$D_0:= [0,1]\times [K^{-1/4},1],\;D_1:= [0,1]\times [0, K^{-1/4}].$$
It is easy to see that
\begin{equation}\label{equ:eaomeg01}
  \| \tilde{\mathcal{E}}_{[0,1]^2}f \|_{L^p(B_R)}\leq \| \tilde{\mathcal{E}}_{D_0}f \|_{L^p(B_R)}+\| \tilde{\mathcal{E}}_{D_1}f \|_{L^p(B_R)}.
\end{equation}

{\bf Step 1: The contribution stems from $D_0$.}
We have a trivial decoupling at scale $K$ for $p\geq 2$
\begin{equation}\label{equ:trivial-1v}
\| \tilde{\mathcal{E}}_{D_0}f \|_{L^p(B_K)}\lesssim K^{O(1)}\Big(\sum_{\nu \subset D_0} \| \tilde{\mathcal{E}}_{\nu}f \|^2_{L^p(B_K)}\Big)^{1/2},
\end{equation}
where $\nu\in\mathcal{F}_3(K,2,4).$
Summing over all the balls $B_K \subset B_R$, we obtain
\begin{equation}\label{equ:trivialsummingnew}
\| \tilde{\mathcal{E}}_{D_0}f \|_{L^p(B_R)}\leq CK^{O(1)}\Big(\sum_{\nu \subset D_0} \| \tilde{\mathcal{E}}_{\nu}f \|^2_{L^p(B_R)}\Big)^{1/2},\;\nu \in\mathcal{F}_3(K,2,4).
\end{equation}
For any given $\nu \subset D_0$ of size $K^{-1/2}\times \sigma^{-1}K^{-1/2}$, where $\sigma$ is a dyadic number with $K^{-\frac14}\leq \sigma \leq \frac{1}{2}$, we claim that
\begin{equation}\label{equ:lamsigmanew}
  \|\tilde{\mathcal {E}}_{\nu}f\|_{L^p(B_R)}
  \leq C_{\varepsilon}\big(\tfrac{R}{K}\big)^{\varepsilon}\Big(\sum_{\vartheta \subset \nu} \| \tilde{\mathcal{E}}_{\vartheta}f \|^2_{L^p(w_{B_R})}\Big)^{1/2},\;\; 2\leq p \leq 4,
\end{equation}
where $\vartheta \in\mathcal{F}_3(R,2,4)$. Plugging \eqref{equ:lamsigmanew} into \eqref{equ:trivialsummingnew}, it follows that
\begin{equation}\label{omegans}
\| \tilde{\mathcal{E}}_{D_0}f \|_{L^p(B_R)}\leq C_{\varepsilon}K^{O(1)}R^{\varepsilon}\Big(\sum_{\vartheta \subset D_0} \Vert \tilde{\mathcal{E}}_{\vartheta}f \Vert^2_{L^p(w_{B_R})}\Big)^{1/2},\;\;2\leq p \leq 4.
\end{equation}
Now, we turn to prove the claim \eqref{equ:lamsigmanew}.
Without loss of generality, we may assume that $\nu = [0, K^{-1/2}]\times [\sigma, \sigma+\sigma^{-1}K^{-1/2}]$.
Taking the change of variables
\begin{equation}\label{lemma35omega1changeofvariables}
\xi_1= K^{-1/2}\eta_1,\; \xi_2= \sigma+ \sigma^{-1}K^{-1/2}\eta_2,
\end{equation}
we get \[\|\tilde{\mathcal{E}}_{\nu}f\|^p_{L^p(B_R)}=\sigma K^2 \big\|\mathcal{E}^{Parp}_{[0,1]^2}\tilde{f}\big\|^p_{L^p(\mathcal{T}_0(B_R))},\]
where $\tilde{f}(\eta_1,\eta_2):= \sigma^{-1}K^{-1}f(K^{-1/2}\eta_1, \sigma+ \sigma^{-1}K^{-1/2}\eta_2)$,  $\mathcal{T}_0$ denotes the map
\[\mathcal{T}_0:\;(x_1,x_2,x_3)\rightarrow \big(K^{-\frac12}x_1,\sigma^{-1}K^{-\frac12}x_2+4\sigma^2K^{-\frac12}x_3K^{-1}x_3\big),\]
and $\mathcal{T}_0(B_R)$ denotes the image of $B_R$ with size roughly
\[K^{-1/2}R \times \sigma^{-1}K^{-1/2}R\times K^{-1}R.\]
We rewrite it into a finitely overlapping union of balls as following
\[\mathcal{T}_0(B_R)=\bigcup B_{R/K}.\]
Using a similar argument as in the proof of \eqref{omega0tauparp}, we get
\[\big\|\mathcal{E}^{Parp}_{[0,1]^2}\tilde{f}\big\|_{L^p(B_{R/K})}\leq C_{\varepsilon}\big(\tfrac{R}{K}\big)^{\varepsilon}\Big(\sum_{\vartheta:K^{1/2}R^{-1/2}-square}\| \mathcal{E}^{Parp}_{\vartheta}\tilde{f}\|^2_{L^p(w_{B_{R/K}})}\Big)^{1/2}.\]
Summing over all the balls $B_{R/K}\subset \mathcal{T}_0(B_R)$ and using Minkowski's inequality, we have
\[\|\mathcal{E}^{Parp}_{[0,1]^2}\tilde{f}\|_{L^p(\mathcal{T}_0(B_R))}\leq C_{\varepsilon}\big(\tfrac{R}{K}\big)^{\varepsilon}\Big(\sum_{\vartheta:K^{1/2}R^{-1/2}-square}\| \mathcal{E}^{Parp}_{\vartheta}\tilde{f}\|^2_{L^p(\mathcal{T}_0(B_R))}\Big)^{1/2}.\]
Taking the inverse change of variables, we obtain
\[\|\tilde{\mathcal{E}}_{\nu}f\|_{L^p(B_R)}\leq C_{\varepsilon}\big(\tfrac{R}{K}\big)^{\varepsilon}\Big(\sum_{\vartheta \subset \nu}\| \tilde{\mathcal{E}}_{\vartheta}f\|^2_{L^p(w_{B_R})}\Big)^{1/2}.\]
Hence, we conclude the claim  \eqref{equ:lamsigmanew}.

{\bf Step 2: The contribution stems from $D_1$.} We first prove
\begin{equation}\label{omegac}
\| \tilde{\mathcal{E}}_{D_1}f \|_{L^p(B_R)}\leq C_{\varepsilon}K^{\varepsilon}\Big(\sum_{\nu \subset D_1} \| \tilde{\mathcal{E}}_{\nu}f \|^2_{L^p(w_{B_R})}\Big)^{1/2},\;\; 2\leq p \leq 6,
\end{equation}
where each $\nu$ denotes a $K^{-1/2}\times K^{-1/4}$-rectangle contained in the region $D_1$.
By the same argument as in the proof of Lemma \ref{lem:3.3}, it suffices to prove
\begin{lemma}\label{lem:curve1new}
Let $\Gamma_{\phi_1}:=\{(t,\phi_1(t)): t\in [0,1]\}$ satisfy
\[\phi_1''\sim 1,\; \vert\phi_1^{(3)}\vert \lesssim 1,\; \vert\phi_1^{(4)}\vert \lesssim 1 \;\; \text{and} \;\; \phi^{(\ell)}_1 =0,\; \ell \geq 5 \; \text{on} \;[0,1].\]
For any $\varepsilon>0$, there exists a constant $C_\varepsilon$ such that
\begin{equation}\label{equ:curve1new}
\|G\|_{L^p(\mathbb{R}^2)}\leq C_{\varepsilon}K^{\varepsilon} \Big(\sum_{\nu}\Vert G_{\nu}\Vert^2_{L^p(\mathbb{R}^2)}\Big)^{1/2},\;2\leq p\leq 6,
\end{equation}
where ${\rm supp}\;\hat{G}\subset \mathcal{N}_{K^{-1}}(\Gamma_{\phi_1})$, $\nu$'s are $K^{-\frac12}\times K^{-1}$-rectangles and $G_{\nu}:=\mathcal F^{-1}(\widehat{G}\chi_{\nu})$.
\end{lemma}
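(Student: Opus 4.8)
The plan is to derive \eqref{equ:curve1new} from Bourgain--Demeter's $\ell^2$ decoupling for the parabola --- that is, \eqref{equ:bd1} with $n=2$, valid for $2\le p\le 6$ --- by a bootstrap on scales, entirely parallel to the argument carried out for the operator $\mathcal{E}^{\rm Parp}$ in the proof of Lemma \ref{lem:omega0tau}. The point is that $\Gamma_{\phi_1}$ is merely a perturbation of the parabola: the condition $\phi_1^{(\ell)}=0$ for $\ell\ge 5$ means $\phi_1$ is a polynomial of degree at most four, and $\phi_1''\sim 1$ on $[0,1]$, so the curve is non-degenerate and the sharp exponent range is indeed $2\le p\le 6$. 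First I would fix the class $\mathcal{C}$ of degree-$\le4$ polynomials $\phi$ with $\phi''\sim1$, $|\phi^{(3)}|\lesssim1$, $|\phi^{(4)}|\lesssim1$ on $[0,1]$ (with fixed implicit constants), and for dyadic $\delta\in(0,1]$ let $D_p(\delta)$ be the least constant in the $\ell^2(L^p(\mathbb{R}^2))$ decoupling of $\mathcal{N}_{\delta^2}(\Gamma_\phi)$ into the caps lying over the $\delta$-intervals of $[0,1]$, taken uniformly over $\phi\in\mathcal{C}$. Since a cap of $\mathcal{N}_{K^{-1}}(\Gamma_{\phi_1})$ over a $K^{-1/2}$-interval is contained in (and comparable to) a $K^{-1/2}\times K^{-1}$-rectangle, \eqref{equ:curve1new} is exactly the case $\delta=K^{-1/2}$ of the bound $D_p(\delta)\lesssim_\varepsilon\delta^{-\varepsilon}$, which is what I would prove. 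The Cauchy--Schwarz inequality gives the trivial bound $D_p(\delta)\lesssim\delta^{-1/2}$, so $D_p(\delta)<\infty$ and the bootstrap is legitimate.

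The core step is the recurrence $D_p(\delta)\le C_\varepsilon\,\delta^{-\varepsilon}\,D_p(\delta^{2/3})$ for $2\le p\le 6$. To prove it I would fix $\delta$, set the intermediate scale $\rho=\delta^{2/3}$, and first apply $\mathcal{N}_{\delta^2}(\Gamma_{\phi_1})\subset\mathcal{N}_{\rho^2}(\Gamma_{\phi_1})$ together with the definition of $D_p(\rho)$ to decouple $G$ into the caps $G_I$ over $\rho$-intervals $I$. On each $I=[a,a+\rho]$, Taylor's formula with the bounds $|\phi_1^{(3)}|,|\phi_1^{(4)}|\lesssim1$ shows that $\phi_1$ differs from its second-order Taylor polynomial at $a$ by $O(\rho^3)=O(\delta^2)$ on $I$, so the Fourier support of $G_I$, which lies in $\mathcal{N}_{\delta^2}(\Gamma_{\phi_1}|_I)$, is contained in an $O(\delta^2)$-neighborhood of a genuine parabola over $I$; a parabolic (affine) change of variables sending $I$ to $[0,1]$ --- which preserves $\ell^2(L^p)$ decoupling constants --- turns this into an $O((\delta/\rho)^2)$-neighborhood of the standard parabola over $[0,1]$, and \eqref{equ:bd1} with $n=2$ decouples it into $(\delta/\rho)$-caps, i.e.\ into $\delta$-subintervals $J\subset I$ after undoing the rescaling, with a loss that is a small power of $\delta$, hence $\lesssim_\varepsilon\delta^{-\varepsilon}$. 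Concatenating the two decoupling inequalities (so the constants multiply) yields the recurrence. I would then iterate it $O(\log\log(1/\delta))$ times down to a scale bounded below by an absolute constant, where $D_p\le C$ trivially; since $\sum_{j\ge0}(2/3)^j=3$ the $\delta$-powers combine to $\delta^{-3\varepsilon}$, and the factor $C_\varepsilon^{O(\log\log(1/\delta))}$ is $\lesssim_{\varepsilon,\varepsilon'}\delta^{-\varepsilon'}$ for every $\varepsilon'>0$; this gives $D_p(\delta)\lesssim_\varepsilon\delta^{-\varepsilon}$ for $2\le p\le 6$, and taking $\delta=K^{-1/2}$ finishes the proof.

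I do not expect a genuine analytic obstacle here, since after this reduction the whole weight is carried by the already-available parabola decoupling \eqref{equ:bd1}; the proof is essentially the $\mathbb{R}^2$ specialization of the computation in Lemma \ref{lem:omega0tau}. The one point that needs attention is the stability of the perturbation under the rescalings used in the induction: under $t\mapsto a+\rho u$ with $a+\rho u\in[0,1]$ and $\rho\le1$, the new phase $\rho^{-2}\big[\phi_1(a+\rho u)-\phi_1(a)-\rho\phi_1'(a)u\big]$ is again a degree-$\le4$ polynomial whose second derivative equals $\phi_1''(a+\rho u)\sim1$ and whose third and fourth derivatives are $\rho\,\phi_1^{(3)}(a+\rho u)$ and $\rho^2\,\phi_1^{(4)}$, hence still $\lesssim1$ with the same constants --- so the recursion genuinely stays inside $\mathcal{C}$ and the higher-order terms only shrink. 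The remaining verifications --- the elementary bookkeeping that with $\rho=\delta^{2/3}$ both the Taylor remainder $\rho^3$ and the rescaled thickness $(\delta/\rho)^2$ are $\sim\delta^{2/3}$, the standard pigeonholing needed to reconcile $O(\delta^2)$- and $\delta^2$-neighborhoods when reading off the final caps, and the equivalence (as in \cite{BD17}) of the $L^p(\mathbb{R}^2)$ Fourier-support formulation used here with a weighted ball formulation --- are all routine.
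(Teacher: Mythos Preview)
Your proposal is correct and follows essentially the same route as the paper: the paper's proof of Lemma~\ref{lem:curve1new} is the one-line remark that it is the $n=2$ analogue of the verification of \eqref{omega0tauparp} in Lemma~\ref{lem:omega0tau}, and what you have written is precisely that argument spelled out --- decouple first at the intermediate scale $\rho=\delta^{2/3}$ using the induction hypothesis, then on each $\rho$-arc use Taylor (error $O(\rho^3)=O(\delta^2)$) plus parabolic rescaling plus \eqref{equ:bd1} with $n=2$, obtaining $D_p(\delta)\le C_\varepsilon\delta^{-\varepsilon}D_p(\delta^{2/3})$ and iterating. Your extra care in taking $D_p$ uniform over the class $\mathcal{C}$ and checking closure of $\mathcal{C}$ under the affine rescalings is harmless (and in fact not needed in this formulation, since the first step applies $D_p(\rho)$ to the original $\phi_1$ and the second step invokes only the parabola inequality), but everything is sound.
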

By the modification as in the verification of \eqref{omega0tauparp}, Lemma \ref{lem:curve1new} follows by Bourgain-Demeter's decoupling inequality \eqref{equ:bd} for $n=2$.

Now, we turn to estimate each term $ \| \tilde{\mathcal{E}}_{\nu}f \|_{L^p(w_{B_R})}$ in the right hand side of \eqref{omegac}.
For each $\nu\subset D_1$, we claim that
\begin{equation}\label{omegas}
\| \tilde{\mathcal{E}}_{\nu}f \|_{L^p(B_R)}
\leq
 \tilde{\mathcal{D}}_p\big(\tfrac{R}{K}\big)
\Big(
\sum_{\vartheta \subset \nu}
\| \tilde{\mathcal{E}}_{\vartheta}f \|^2_{L^p(w_{B_R})}
\Big)^{1/2},\quad 2\leq p \leq 4,
\end{equation}
where $\vartheta \in\mathcal{F}_3(R,2,4).$
Plugging \eqref{omegas} into \eqref{omegac}, we obtain
\begin{equation}\label{equ:tildome1gans}
\| \tilde{\mathcal{E}}_{D_1}f \|_{L^p(B_R)}\leq C_{\varepsilon}K^{\varepsilon}\tilde{\mathcal{D}}_p\big(\tfrac{R}{K}\big)\Big(\sum_{\vartheta \subset D_1} \Vert \tilde{\mathcal{E}}_{\vartheta}f \Vert^2_{L^p(w_{B_R})}\Big)^{1/2},\;\; 2\leq p \leq 4.
\end{equation}

Next, we turn to show the claim \eqref{omegas}.
Without loss of generality, we may assume that $\nu = [0, K^{-1/2}]\times [0, K^{-1/4}]$.
Taking the change of variables $$\xi_1= K^{-1/2}\eta_1,\; \xi_2= K^{-1/4}\eta_2,$$ we get
\[\|\tilde{\mathcal{E}}_{\nu}f\|^p_{L^p(B_R)}=\sigma K^2 \|\tilde{\mathcal{E}}_{[0,1]^2}\tilde{f}\|^p_{L^p(\mathcal{T}_1(B_R))},\]
where $\tilde{f}(\eta_1,\eta_2):= K^{-3/4}f(K^{-1/2}\eta_1, K^{-1/4}\eta_2)$. Here $\mathcal{T}_1$ denotes the map
\[(x_1,x_2,x_3)\rightarrow (K^{-1/2}x_1,K^{-1/4}x_2,K^{-1}x_3)\]
and $\mathcal{T}_1(B_R)$ denotes the image of $B_R$ with size roughly as follows
\[K^{-1/2}R \times K^{-1/4}R\times K^{-1}R.\]
Using the similar argument as in Case(a) and Case(b) in the proof of \eqref{omegas1}, one can show that the image of $\vartheta$ belongs to the decomposition $\mathcal{F}_3(\frac{R}{K},2,4)$. We write $\mathcal{T}_1(B_R)$ into a finitely overlapping union of balls as follows
\[\mathcal{T}_1(B_R)=\bigcup B_{R/K}.\]
By the definition of $\tilde{\mathcal{D}}_p(\cdot),$ we obtain
\[\|\tilde{\mathcal{E}}_{[0,1]^2}\tilde{f}\|_{L^p(B_{R/K})}\leq \tilde{\mathcal{D}}_p\big(\tfrac{R}{K}\big)\Big(\sum_{\vartheta\in \mathcal{F}_3(\frac{R}{K},2,4)}\| \tilde{\mathcal{E}}_{\vartheta}\tilde{f}\|^2_{L^p(w_{B_{R/K}})}\Big)^{1/2}.\]
Summing over all the balls $B_{R/K}\subset \mathcal{T}_1(B_R)$ and using Minkowski's inequality, we have
\[\|\tilde{\mathcal{E}}_{[0,1]^2}\tilde{f}\|_{L^p(\mathcal{T}_1(B_R))}\leq C_{\varepsilon}R^{\varepsilon}\Big(\sum_{\vartheta\in \mathcal{F}_3(\frac{R}{K},2,4)}\| \tilde{\mathcal{E}}_{\vartheta}\tilde{f}\|^2_{L^p(\mathcal{T}_1(B_R))}\Big)^{1/2}.\]
Taking the inverse change of variables, we obtain
\[\|\tilde{\mathcal{E}}_{\nu}f\|_{L^p(B_R)}\leq \tilde{\mathcal{D}}_p\big(\tfrac{R}{K}\big)\Big(\sum_{\vartheta \subset \nu}\| \tilde{\mathcal{E}}_{\vartheta}f\|^2_{L^p(w_{B_R})}\Big)^{1/2}.\]
Hence, we obtain the claim \eqref{omegas}.

\vskip 0.1in

Using \eqref{equ:eaomeg01}, \eqref{omegans} and \eqref{equ:tildome1gans}, we derive
$$ \| \tilde{\mathcal{E}}_{[0,1]^2}f \|_{L^p(B_R)}\leq \Big(C(\varepsilon)K^{O(1)}R^{\varepsilon}+C_{\varepsilon}K^{\varepsilon}\tilde{\mathcal{D}}_p\big(\tfrac{R}{K}\big)\Big)
\Big(\sum_{\vartheta \subset \mathcal{F}_3(R,2,4)} \Vert \tilde{\mathcal{E}}_{\vartheta}f \Vert^2_{L^p(w_{B_R})}\Big)^{1/2}.$$
This inequality together with the definition of $\tilde{\mathcal{D}}_p(R)$ yields
\[\tilde{\mathcal{D}}_p(R)\leq C(\varepsilon)K^{O(1)}R^{\varepsilon}+C_{\varepsilon}K^{\varepsilon}\tilde{\mathcal{D}}_p\big(\tfrac{R}{K}\big).\]
Iterating the above inequality $m = [\log_K R]$ times, we derive that $\tilde{\mathcal{D}}_p(R)\lesssim_{\varepsilon}R^{\varepsilon}$.

Therefore, we complete the proof of Lemma \ref{lem:24}.
\end{proof}

%
%
%

\begin{remark}
The argument in this paper also implies the following result. Let $2\leq p\leq4$ and $m \geq 4$ be an even number. For any $\varepsilon>0$ , there holds
\begin{equation*}
  \|\mathcal{E}_{[0,1]^2}g\|_{L^p(B_R)}\leq C(\varepsilon,p)R^{\varepsilon}\Big(\sum_{\theta\in \mathcal{F}_3(R,m)}\|\mathcal E_{\theta}g\|^2_{L^{p}(w_{B_R})}\Big)^{1/2},
\end{equation*}
where
\[\mathcal E_{Q}g(x)=\int_{Q}g(\xi_1,\xi_2)e(x_1\xi_1+x_2\xi_2+x_3(\xi_1^m+\xi_2^m))\;d\xi_1\;d\xi_2.\]
\end{remark}




\begin{center}

\end{center}

\end{document}